\theoremstyle{plain}
\newtheorem{theorem}{Theorem}[section]
\newtheorem{lemma}[theorem]{Lemma}
\newtheorem{corollary}[theorem]{Corollary}
\newtheorem*{theorem*}{Theorem}
\theoremstyle{definition}
\newtheorem{definition}[theorem]{Definition}
\newtheorem{remark}[theorem]{Remark}
\newtheorem{example}[theorem]{Example}
\newcommand{\Hom}{\mathrm{Hom}}
\newcommand{\dual}{\vee}
\newcommand{\trop}[1]{\mathrm{trop}\left(#1\right)}
\newcommand{\tropim}[2]{\mathrm{tropim}_{#1}\left(#2\right)}
\newcommand{\tropdet}[1]{\mathrm{tdet}\left(#1\right)}
\newcommand{\puiseux}[1]{ \kappa\{t^{#1}\}} 
\newcommand{\val}{\mathrm{val}}
\DeclareMathOperator{\rank}{rk}
\newcommand{\stsum}{+_{st}}
\newcommand{\stint}{\cap_{st}}
\newcommand{\contract}{/}
\newcommand{\delete}{\!\setminus\!}
\newcommand{\B}{\mathbb B}
\newcommand{\T}{\mathbb T}
\newcommand{\R}{\mathbb R}
\newcommand{\linext}[3]{#1 +_{#2} #3}
\title{The image of a tropical linear space}
\author{Joshua Mundinger}
\date{\today}
\address{Department of Mathematics, University of Chicago}
\email{mundinger@uchicago.edu}
\begin{document}

\begin{abstract}
	Given a tropical linear space $L \subseteq \T^n$ and a matrix $A \in \T^{m \times n}$, the image $AL$ of $L$ under $A$ is typically not a tropical linear space.
	We introduce a tropical linear space $\tropim{A}{L}$, the tropical image, containing $AL$.
	We show under mild hypotheses that $\tropim{A}{L}$ is realizable if $L$ is
	and apply the tropical image to construct the stable sum of two tropical linear spaces without a disjoint pair of bases.
\end{abstract}

\maketitle
\section{Introduction}

In tropical geometry, a single polynomial defines a tropical hypersurface,
but not every intersection of hypersurfaces is a tropical variety.
Morphisms of tropical varieties carry similar difficulties.
For instance, the image of a tropicalized morphism (a tuple of tropicalized polynomials) typically is not a tropical variety, let alone equal to the tropicalization of the image. 
Can the image of a tropical morphism be extended to a tropical variety?

In this paper, we give an algebraic treatment of this question in the case of a linear map on a tropical linear space,
drawing on the algebraic treatment of tropical linear spaces of \cite{frenk13} and the construction of Stiefel tropical linear spaces in \cite{fink15}.
Given a matrix $A$ and a tropical linear space $L$,
we introduce a tropical linear space $\tropim{A}{L}$, called the \emph{tropical image}. 
The tropical Pl\"ucker coordinates of the tropical image are determined by the minors of $A$ and the coordinates of $L$,
analogously to the classical situation.
\begin{theorem*}
	The tropical image $\tropim{A}{L}$ enjoys the following properties:
	\begin{itemize}
		\item it contains the set-theoretic image $AL$;
		\item its rank is at most the rank of $L$;
		\item the underlying matroid of $\tropim{A}{L}$ is induced from the underlying matroid of $L$ via the bipartite graph underlying $A$;
		\item if $A$ has a non-zero maximal minor, then $\tropim{A}{S^E}$ is the Stiefel troipcal linear space associated to $A^T$;
		\item if $L = \trop\Lambda$ is realizable over a sufficiently large field, then $\tropim{A}{L}$ is realizable
			and is equal to the tropicalization of an image of $\Lambda$;
		\item the stable sum $L \stsum L'$ of tropical linear spaces is the tropical image of $L \oplus L'$ under the matrix $\begin{bmatrix} I & I \end{bmatrix}$ representing addition.
	\end{itemize}
\end{theorem*}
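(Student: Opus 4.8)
The definition to unwind is the tropicalized Cauchy--Binet formula: writing $p = (p_I)$ for the tropical Pl\"ucker vector of $L$ and $A_{J,I}$ for the submatrix of $A$ on rows $J$ and columns $I$, the tropical Pl\"ucker coordinates of $\tropim{A}{L}$ are
\[
q_J = \min_{I}\left(\tropdet{A_{J,I}} + p_I\right), \qquad |J| = |I|.
\]
The plan is to derive the four ``formal'' bullets (containment, rank, induced matroid, stable sum) by direct manipulation of this formula, then handle the Stiefel specialization, and finally treat realizability, which is the genuine obstacle.

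For the \emph{induced matroid} I would compute the support of $q$: one has $q_J \ne \infty$ exactly when some $I$ satisfies both $p_I \ne \infty$ and $\tropdet{A_{J,I}} \ne \infty$, i.e.\ $I$ is a basis of the underlying matroid $M$ of $L$ and admits a perfect matching to $J$ in the bipartite graph of $A$ (a nonvanishing tropical determinant is precisely a system of distinct representatives). This is exactly the definition of the matroid induced from $M$ through that bipartite graph, giving the third bullet; the \emph{rank bound} follows at once, since a matched independent set has size at most $\rank M = \rank L$. For \emph{containment}, given $x \in L$ I would verify $Ax \in \tropim{A}{L}$ using the tropical circuit (tropical Cramer) description of points: substituting $y = Ax$ into the defining attained-twice conditions for $q$ and collapsing the resulting minima by the cofactor structure of $\tropdet{\,\cdot\,}$, so that each attained-twice minimum for $x$ propagates to one for $Ax$. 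For the \emph{stable sum} I would specialize $A = [\,I\ I\,]$: a minor of $[\,I\ I\,]$ on rows $J$ has finite tropical determinant (necessarily $0$) exactly when its columns select one of the two copies of each element of a partition $J = B \sqcup C$, so Cauchy--Binet collapses to $q_J = \min_{B \sqcup C = J}(p_B + p'_C)$, the convolution defining $L \stsum L'$.

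For the \emph{Stiefel} case I would take $L = S^E$ to be the free tropical linear space on $E$, whose only finite Pl\"ucker coordinate is $p_E = 0$; the Cauchy--Binet sum then collapses to the single term $q_J = \tropdet{A_{J,E}}$, the maximal minors of $A$. These are exactly the maximal minors of $A^T$, i.e.\ the Pl\"ucker coordinates of the Stiefel tropical linear space of $A^T$ in the sense of \cite{fink15}. The hypothesis that $A$ has a nonzero maximal minor ensures these coordinates are not all tropically zero, so $\tropim{A}{S^E}$ is the full Stiefel space of the expected rank.

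The hard part is \emph{realizability}. Assuming $L = \trop{\Lambda}$ with $\Lambda \subseteq K^n$ over a valued field $K$ (e.g.\ Puiseux series) and Pl\"ucker vector $(\pi_I)$ with $\val(\pi_I) = p_I$, I would choose a lift $\tilde A$ of $A$ having entries $\tilde A_{ij} = c_{ij}\,t^{A_{ij}}$ for generic residues $c_{ij}$ and form the classical image $\tilde A \Lambda$. By ordinary Cauchy--Binet its Pl\"ucker coordinates are $\tilde q_J = \sum_I \det(\tilde A_{J,I})\,\pi_I$, and genericity of the $c_{ij}$ gives $\val(\det \tilde A_{J,I}) = \tropdet{A_{J,I}}$, whence $\val(\tilde q_J) \ge q_J$ with equality \emph{unless} the terms of minimal valuation cancel. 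The entire difficulty is ruling out this cancellation simultaneously for every $J$: I would encode the leading coefficient of $\tilde q_J$ as a polynomial in the parameters $c_{ij}$ and the residues of the $\pi_I$ and show it is not identically zero, exhibiting a surviving monomial via the induced-matroid description of the support, so that over a sufficiently large field a choice of parameters avoiding all these finitely many hypersurfaces exists. For such a choice $\val(\tilde q_J) = q_J$ for all $J$, which shows simultaneously that $\tropim{A}{L}$ is realizable and equals $\trop{\tilde A \Lambda}$, an image of $\Lambda$. This non-cancellation step is where I expect the main technical work, and where the ``sufficiently large field'' hypothesis is indispensable.
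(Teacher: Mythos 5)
Your proposal contains a genuine structural gap: you take the tropical Cauchy--Binet formula as the \emph{definition} of $\tropim{A}{L}$, but in the paper the tropical image is defined as the coordinate projection of the tropical graph $g(w,A)$, and the formula is a derived statement (Lemma \ref{lemma: plucker-image-tropdet}) whose correct form carries a correction you have dropped: $z_J = \sum_{I \subseteq E-K} \tropdet{A_{JI}}\, w_{I\cup K}$, where $K$ is a basis for the contraction of the tropical graph to $E$ and $|J| = \rank \tropim{A}{L}$, which can be strictly smaller than $d = \rank L$. Your formula $q_J = \min_I\bigl(\tropdet{A_{J,I}} + p_I\bigr)$ with $|I|=|J|$ only typechecks when $|J| = d$, since $p$ is defined on $\binom{E}{d}$; whenever the rank drops (for instance $|F| < d$, or no size-$d$ subset of $F$ admits a matching onto a basis of $M$), every $q_J$ is tropically zero and your object is not a tropical linear space at all. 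This silently undermines several bullets: the induced-matroid argument only detects matchings from size-$d$ sets, so it identifies bases of $\Gamma(M)$ only in the rank-preserving case; the Stiefel bullet fails when $|F| < |E|$ (compare the paper's Corollary after Lemma \ref{lemma: plucker-image-tropdet}, which passes to a submatrix $B$); and in the stable-sum bullet you use the uncorrected formula without noting that it is justified there only because $w \wedge z \neq 0$ forces disjoint bases and rank additivity via the matroid union, which is the first half of the paper's proof of Theorem \ref{theorem: stable-sum}. Moreover, even in the full-rank case you never verify that $q$ satisfies the tropical Pl\"ucker relations; this is not automatic for a max-plus permanental convolution, whereas in the paper it is free because $g(w,A)$ is assembled from Pl\"ucker-preserving operations (Lemma \ref{lemma: plucker-ops}) and coordinate minors of tropical linear spaces are tropical linear spaces (Lemma \ref{lemma: minor-plucker-coordinates}).

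Two further points of comparison. For the containment $AL \subseteq \tropim{A}{L}$, your ``collapsing the minima by the cofactor structure'' is a gesture at precisely the nontrivial content; the paper avoids this entirely by proving that a linear extension contains the set-theoretic graph through an explicit computation of valuated cocircuits (Lemma \ref{lemma: linear-extension-set}: the cocircuits of $\linext{L_w}{\varphi}{p}$ are $v + \varphi(v)e_p$ for $v$ a cocircuit of $L_w$, plus idempotency of addition for the undergraph), after which Lemma \ref{lemma: image-basics}(a) is a one-line projection. For realizability, your non-cancellation strategy is essentially the paper's Lemma \ref{lemma: lift} (a Lindstr\"om-type non-vanishing statement, Lemma \ref{lemma: poly-non-zero}, over a sufficiently large residue field), and your surviving-monomial observation is sound since distinct pairs $(I,\sigma)$ contribute distinct monomials in the $c_{ij}$ with coefficients that are $\pm$ residues of Pl\"ucker coordinates; but the paper applies the lift to the Pl\"ucker coordinates of the \emph{graph} of $\Delta$ on $\Lambda$, not of the image, and then uses that coordinate projection commutes with tropicalization. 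This is not a cosmetic difference: the graph always has rank $d$, so the lifting argument never meets the rank-deficient case, whereas lifting the size-$d$ Cauchy--Binet coordinates of the image directly, as you propose, proves only the vacuous identity $0 = 0$ exactly when $\rank \tropim{A}{L} < d$. To repair your route you would need to first establish Lemma \ref{lemma: plucker-image-tropdet} with the basis $K$, at which point you have in effect reconstructed the paper's graph-then-project architecture.
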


The tropical image allows us to unify and generalize constructions appearing in the literature, for instance by extending Stiefel tropical linear spaces to all matrices, or the stable sum to all pairs of linear spaces.
The tropical image is constructed by first constructing a linear space containing the set-theoretic graph of a linear function, and then projecting.
The tropical graph coincides with an iterated tropical modification. 
By giving an algebraic treatment and connecting to the matroid literature the linear case, we are able to give examples of unexpected phenomena in tropical geometry, for instance an example of a tropical modification $L' \to L$ where $L$ and $L'$ are tropical linear spaces but the function corresponding to the modification is \textit{non}-linear.
\\
\paragraph{\emph{Acknowledgments.} 
The author thanks Noah Giansiracusa for invaluable discussion and guidance.
}
\section{Preliminaries}

\subsection{Modules over idempotent semifields}

A \emph{semiring} is a set with two binary operations that satisfy the axioms of a (commutative) ring except for the existence of additive inverses.
The additive identity will be denoted $0$, and the multiplicative identity $1$.
A \emph{semifield} is a semiring where every nonzero element has a multiplicative inverse.
A semiring is \emph{idempotent} if and only if $1+1=1$.
Idempotent semirings have a canonical ordering given by $a \leq b$ if and only if $a+b =b$;
if this ordering is total, the semiring is said to be \emph{totally ordered}.
For tropical geometry, the main example of a totally ordered semifield is the tropical semifield $\T = \R \cup \{-\infty\}$ with the operations of maximum and the usual real addition.

A module $N$ over a semifield $S$ is an abelian monoid with a homomorphism $S \to End(N)$.
The dual of an $S$-module $N$ is $N^\dual := \Hom(N,S)$. 
If $S$ is an idempotent semifield and $E = \{e_1,e_2,\ldots,e_n\}$ is a finite set, the free $S$-module with basis $E$ will be denoted $S^E$. 
Unlike the situation for fields, $S^E$ has a unique basis up to permutation and scaling \cite[Proposition 2.2.2]{giansiracusa17}.
The dual $(S^E)^\dual$ is also free;
the dual basis to $E$ will usually be denoted $\{x_1,x_2,\ldots,x_n\}$
and satisfies
\begin{equation*}
	x_i(e_j) = \begin{cases} 1 & i = j \\ 0 & i \neq j \end{cases}.
\end{equation*}	
\begin{remark}
	Totally ordered idempotent semifields are equivalent to totally ordered abelian groups,
	in that $S \mapsto S^\times$ is an equivalence of categories.
	The language of semifields is used to facilitate module theory,
	as well as to bring out structural analogies to classical algebraic geometry.
\end{remark}

\begin{definition}(cf. \cite[\S 2.3]{giansiracusa17})
	Let $S$ be a totally ordered idempotent semifield,
	and let $f \in (S^E)^\dual$ be a linear form.
	The \emph{tropical hyperplane} defined by $f$
	is the submodule 
	\begin{equation*}
		\left\{v = \sum_{i=1}^n v_ie_i \in S^E: f(v) = f\left(\sum_{i \neq j} v_ie_i\right) \text{ for all } j \right\}.
	\end{equation*}
\end{definition}
This definition uses the formulation of the algebraic \textit{bend relations} introduced in \cite{giansiracusa16}. If $f = \sum_{i=1}^n f_ix_i$, then the condition that $v = \sum_{i=1}^n v_ie_i$ is in the tropical hyperplane defined by $f$ is equivalent to saying that the maximum term of $\{f_iv_i\}_{i=1}^n$ is achieved twice.

\subsection{Tropical linear spaces}
Tropical linear spaces, introduced by Speyer \cite{speyer08}, are defined by tropical Pl\"ucker coordinates,
which satisfy the tropicalization of the classical quadratic Pl\"ucker relations.
Here and below, $\binom{E}{d}$ will denote the set of subsets of $E$ of size $d$.
\begin{definition}
	Let $S$ be a totally ordered idempotent semifield.
	A \emph{tropical Pl\"ucker vector} of rank $d$ on ground set $E$
	is a nonzero vector $w \in S^{\binom{E}{d}}$ satisfying the \emph{tropical Pl\"ucker relations}:
	for any $J \in \binom{E}{d+1}$ and $K \in \binom{E}{d-1}$,
	\begin{equation*}
		\sum_{i \in J - K} w_{J-i}w_{K+i} = \sum_{i \in J-K, i \neq j} w_{J-i}w_{K+i}
	\end{equation*}
	for all $j \in J-K$.
\end{definition}
Again we use the formulation of the tropical Pl\"ucker relations via bend relations as in \cite{giansiracusa17}.
The coordinates of a tropical Pl\"ucker vector are also known as tropical Pl\"ucker coordinates.
\begin{definition}
	Let $S$ be a totally ordered idempotent semifield,
	and let $w \in S^{\binom{E}{d}}$ be a tropical Pl\"ucker vector.
	Then the \emph{tropical linear space} associated to $w$
	is the intersection of the tropical hyperplanes defined by 
	\begin{equation*}
		\sum_{i \in J} w_{J-i}x_i	
	\end{equation*}
	over all $J \in \binom{E}{d+1}$.
	The tropical linear space associated to $w$ is denoted $L_w$.
\end{definition}
Tropical linear spaces determine their tropical Pl\"ucker coordinates up to a scalar in $S^\times$ (first proven over $\T$ in \cite{speyer08} and \cite{maclagan15}, and for any $S$ in \cite[\S 6.2]{giansiracusa17}).

Baker and Bowler's framework of matroids over hyperfields, and more generally, matroids over tracts, 
has provided a more general framework for tropical Pl\"ucker vectors
and tropical linear spaces \cite{baker16}.
A totally ordered idempotent semifield $S$ defines a hyperaddition operator $\boxplus$ by 
\begin{equation*}
	a \boxplus b = \begin{cases} a+b & a \neq b \\ \{x: x \leq a\} & a = b \end{cases}.
\end{equation*}
Replacing addition with hyperaddition makes $S$ a hyperfield,
and matroids over this hyperfield in the sense of \cite{baker16} are equivalent to tropical linear spaces;
in terms of hyperfields, $L_w$ is the set of covectors in the sense of \cite{anderson18}.

Unlike matroids over tracts, tropical linear spaces are submodules of the ambient free module, since tropical hyperplanes are.
The following lemma gives a generating set for a tropical linear space,
known as the \emph{valuated cocircuits}.
The hypothesis of totally ordered coefficients is essential.
\begin{lemma}{\cite[Proposition 4.1.9]{frenk13}} \label{lemma: generated-by-cocircuits}
	Let $S$ be a totally ordered idempotent semifield,
	and let $w \in S^{\binom{E}{d}}$ be a tropical Pl\"ucker vector.
	Then $L_w$ is generated as an $S$-module by
	\begin{equation*}
		\left\{ \sum_{i \in E -K} w_{K+i}e_i : K \in \binom{E}{d-1}\right\}.
	\end{equation*}
\end{lemma}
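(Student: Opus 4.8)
The plan is to prove the two inclusions between $L_w$ and the submodule $C$ generated by the cocircuits $c_K = \sum_{i \in E - K} w_{K+i} e_i$, for $K \in \binom{E}{d-1}$. For the easy inclusion $C \subseteq L_w$, since $L_w$ is a submodule it suffices to check that each $c_K$ lies in every defining hyperplane. Fixing $J \in \binom{E}{d+1}$ and evaluating the form $\sum_{i \in J} w_{J-i} x_i$ on $c_K$, the fact that $(c_K)_i = w_{K+i}$ for $i \notin K$ and $(c_K)_i = 0$ for $i \in K$ leaves only the terms $w_{J-i} w_{K+i}$ with $i \in J - K$; the requirement that this maximum be attained at least twice is \emph{exactly} the tropical Pl\"ucker relation for the pair $(J,K)$. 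Hence every $c_K$ lies in $L_w$.

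For the reverse inclusion I would take $v \in L_w$ and, for each $K$, let $\lambda_K \in S$ be the largest scalar with $\lambda_K c_K \le v$; using total order this is the infimum over $i \in \text{supp}(c_K)$ of $v_i w_{K+i}^{-1}$. Then $v' := \sum_K \lambda_K c_K \le v$, and the goal is $v' = v$. Observe that if $\text{supp}(c_K) \not\subseteq \text{supp}(v)$ then some $v_i = 0$ forces $\lambda_K = 0$, so only cocircuits supported inside $\text{supp}(v)$ contribute. It therefore suffices to show that for every $j \in \text{supp}(v)$ there is a single $K$ with $j \in \text{supp}(c_K) \subseteq \text{supp}(v)$ whose defining infimum is attained at $i = j$, i.e. $w_{K+i}\, v_j \le w_{K+j}\, v_i$ for all $i \in \text{supp}(c_K)$; for such a $K$, the coordinate $j$ of $\lambda_K c_K$ already equals $v_j$.

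First I would settle the case $\text{supp}(v) = E$ (no loops, so every $j$ lies in a basis). Here I \emph{twist} the Pl\"ucker vector by $v$, setting $\hat w_B = w_B \prod_{l \in B} v_l^{-1}$; the condition $w_{K+i} v_j \le w_{K+j} v_i$ becomes precisely $\hat w_{K+i} \le \hat w_{K+j}$. Thus I want a basis $B^* = K + j$ through $j$ that is a \emph{global} maximizer of $\hat w$, since global maximality gives $\hat w_{B^* - j + i} \le \hat w_{B^*}$ for all $i$, which is the desired condition. The hypothesis $v \in L_w$ enters exactly in producing a global maximizer through $j$: starting from any global maximizer $B^*$ with $j \notin B^*$, basis exchange gives $i^* \in B^*$ with $B^* - i^* + j$ a basis, and applying the bend relation to $\sum_{l \in J} w_{J - l} x_l$ with $J = B^* + j$ (where, after factoring out the common unit $\prod_{m \in J} v_m$, the relevant $d$-subsets are $B^*$ and the $B^* - i + j$) forces the maximum $\hat w_{B^*}$ to be attained at a second subset, which must be some $B^* - i + j$ containing $j$. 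This yields a global maximizer through $j$ and completes the full-support case.

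The main obstacle is the support bookkeeping in general. One shows $\text{supp}(v)$ is the complement of a flat $F$ of the underlying matroid, and the witnessing cocircuits are exactly those of the contraction $M/F$; their indexing sets $K$ necessarily meet $F$, so the twist $\hat w$ above degenerates and the global maximization cannot be run directly on $M$. I would therefore reduce to the full-support case by contracting $F$: verify that the restriction of $v$ to $\text{supp}(v)$ is a covector for the contracted valuation, that cocircuits supported in $\text{supp}(v)$ descend to cocircuits of $M/F$ with matching coefficients, and that the reconstruction lifts back. Establishing that $\text{supp}(v)$ is a flat complement and that contraction is compatible with $L$ is the delicate step. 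Throughout, total-orderedness of $S$ is essential—both to form the scalars $\lambda_K$ and to run the maximization–exchange argument—and is precisely what fails for general idempotent semifields.
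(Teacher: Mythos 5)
First, a point of comparison you could not have known: the paper does not prove this lemma at all---it is quoted from Frenk's thesis (\cite[Proposition 4.1.9]{frenk13})---so your argument has to stand on its own merits. In its main thrust it does. The easy inclusion is exactly right. The full-support case is a complete and correct proof: twisting by the unit coordinates of $v$ preserves the tropical Pl\"ucker relations, a global maximizer $B^*$ of $\hat w$ exists by finiteness and the total order, and the bend relation for $v$ at $J = B^* + j$ (whose terms are, up to the common unit $\prod_{l \in J} v_l$, exactly the values $\hat w_{J-l}$) forces the maximum $\hat w_{B^*}$ to be attained at some $B^* - i^* + j$, which is then a maximizer through $j$. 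This is the standard Dress--Wenzel/Murota-style ``maximizer plus exchange'' argument. One small remark: the separate appeal to basis exchange is redundant, since the bend relation itself produces the exchange element $i^*$; it is harmless but should be cut.

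The only real shortfall is that the general-support case is a plan rather than a proof, and you overestimate its delicacy. You do not need that $F = E - \mathrm{supp}(v)$ is a flat. Take $J$ to be any maximal independent subset of $F$ in the underlying matroid and set $z_I = w_{J \cup I}$ for $I \subseteq E - F$ of size $d - |J|$. Then three routine checks close the argument: (1) the Pl\"ucker relations for $z$ are literally a subset of those for $w$, namely the relations at the pairs $(J \cup J'', J \cup K'')$; (2) $v$ restricted to $E-F$ lies in $L_z$, because in each bend relation of $w$ at a set $J \cup J''$ the terms indexed by elements of $J$ vanish ($v$ is zero on $F$), leaving exactly the bend relations of $z$; (3) every cocircuit $c_{J \cup K'}$ of $w$ vanishes on $F$, since $w_{J \cup K' + i} \neq 0$ for $i \in F - J$ would make $J + i$ an independent subset of $F$, contradicting maximality of $J$---so the cocircuits of $z$ are precisely the cocircuits of $w$ supported in $E - F$. (Alternatively, checks (2) and (3) can be replaced by citing Lemma \ref{lemma: minor-plucker-coordinates}(c) of the paper, whose proof via Baker--Bowler is independent of the present lemma, so there is no circularity.) Your full-support argument applied to $z$ on the ground set $E-F$ then lifts back verbatim. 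You should also dispose of one degenerate case explicitly: if $F$ contained a basis $J$ of $M$, then $z$ would have rank $0$; but then for any $j \in \mathrm{supp}(v)$ the bend relation at $J + j$ has the single nonzero term $w_J v_j$, a contradiction, so this cannot happen for $v \neq 0$. With these additions your proposal is a correct and self-contained proof.
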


\subsection{Subspaces and the tropical incidence relations}

For two tropical Pl\"ucker vectors $w$ and $z$ on ground set $E$, when is $L_w \subseteq L_z$?
This occurs if and only if $w$ and $z$ satisfy certain relations, which turn out to be the tropical analogue of incidence relations.
These relations first appeared in \cite{haque12} over $\T$ but hold over any idempotent semifield.
\begin{lemma} \label{lemma: tropical-incidence-relations}
	Let $S$ be a totally ordered idempotent semifield, 
	and let $w \in S^{\binom{E}{d}}$ and $z \in S^{\binom{E}{e}}$ be tropical Pl\"ucker vectors.
	Then $L_w \subseteq L_{z}$ if and only if 
	\begin{equation} \label{equation: tropical-incidence-relations}
		\sum_{i \in A - B} z_{A-i}w_{B+i} = \sum_{i \in A-B, i \neq j} z_{A-i}w_{B+i}
	\end{equation}
	for all $A \in \binom{E}{e+1}$, $B \in \binom{E}{d-1}$, and $j \in A-B$.
\end{lemma}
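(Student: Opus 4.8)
The plan is to reduce the statement to a single membership computation by exploiting the generating set of valuated cocircuits from Lemma~\ref{lemma: generated-by-cocircuits}. Since $L_z$ is a submodule of $S^E$ and, by that lemma, $L_w$ is generated as an $S$-module by the valuated cocircuits $c_B := \sum_{i \in E - B} w_{B+i} e_i$ for $B \in \binom{E}{d-1}$, the containment $L_w \subseteq L_z$ holds if and only if every cocircuit $c_B$ lies in $L_z$. Both directions of the lemma will then follow once I show that, for a fixed $B \in \binom{E}{d-1}$, the condition $c_B \in L_z$ is equivalent to the incidence relations \eqref{equation: tropical-incidence-relations} for that $B$ and all $A \in \binom{E}{e+1}$.

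First I would set up the key computation. By definition $L_z$ is the intersection over $A \in \binom{E}{e+1}$ of the tropical hyperplanes defined by $f_A = \sum_{i \in A} z_{A-i} x_i$, so $c_B \in L_z$ if and only if $c_B$ lies in each such hyperplane. The cocircuit $c_B$ has $i$-th coordinate $w_{B+i}$ when $i \notin B$ and $0$ when $i \in B$. Evaluating $f_A$ at $c_B$, every term indexed by $i \in A \cap B$ vanishes, leaving $f_A(c_B) = \sum_{i \in A - B} z_{A-i} w_{B+i}$. I would then check the bend condition index by index: for $j \notin A - B$ (that is, $j \notin A$, or $j \in A \cap B$) the $j$-th term is already $0$, so deleting it changes nothing and the relation is automatic; for $j \in A - B$ the bend relation $f_A(c_B) = f_A\big(\sum_{k \neq j} (c_B)_k e_k\big)$ reads precisely
\begin{equation*}
	\sum_{i \in A - B} z_{A-i} w_{B+i} = \sum_{i \in A - B, \, i \neq j} z_{A-i} w_{B+i},
\end{equation*}
which is the incidence relation \eqref{equation: tropical-incidence-relations}. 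Thus $c_B \in L_z$ is equivalent to \eqref{equation: tropical-incidence-relations} holding for all $A$ with this fixed $B$.

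With this equivalence in hand both implications are immediate. For the forward direction, if $L_w \subseteq L_z$ then each cocircuit $c_B \in L_w$ also lies in $L_z$, so the incidence relations hold for every $B \in \binom{E}{d-1}$ and every $A$; since $B$ ranges over all of $\binom{E}{d-1}$, we obtain all of \eqref{equation: tropical-incidence-relations}. Conversely, if \eqref{equation: tropical-incidence-relations} holds for all $A$ and $B$, then every generator $c_B$ of $L_w$ lies in $L_z$, and because $L_z$ is closed under $S$-linear combinations we conclude $L_w \subseteq L_z$.

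The computation is elementary, so the only real content is bookkeeping: I must be careful that the Pl\"ucker coordinates $z_{A-i}$ and $w_{B+i}$ appearing are genuinely defined (they are, since $i \in A - B$ forces $A - i \in \binom{E}{e}$ and $B + i \in \binom{E}{d}$) and that the case analysis on the deletion index $j$ correctly isolates the nontrivial relations. The main subtlety to get right is matching the bend-relation formulation of hyperplane membership --- the maximum among $\{z_{A-i} w_{B+i}\}$ being attained at least twice --- to the stated incidence relation, including the degenerate case where all terms vanish, which is handled uniformly by the observation above that deleting a zero term changes nothing.
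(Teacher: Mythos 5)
Your proof is correct and follows essentially the same route as the paper: reduce via Lemma~\ref{lemma: generated-by-cocircuits} to checking that each valuated cocircuit of $w$ lies in $L_z$, and observe that the bend relations for a cocircuit against each defining hyperplane of $L_z$ are precisely the incidence relations \eqref{equation: tropical-incidence-relations}. Your write-up merely makes explicit the bookkeeping (the vanishing of terms indexed by $A \cap B$ and the triviality of deleting a zero term for $j \notin A-B$) that the paper leaves implicit.
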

\begin{proof}
	The equations \eqref{equation: tropical-incidence-relations} are exactly requiring
	that for any $B \in \binom{E}{d-1}$ and $A \in \binom{E}{e+1}$,
	the valuated cocircuit $\sum_{i \in E - B} w_{B+i}e_i$
	of $w$ is contained in 
	the hyperplane defined by $\sum_{i \in A} z_{A -i} x_i$,
	one of the hyperplanes defining $L_z$.
	Hence, the equations \eqref{equation: tropical-incidence-relations} hold if and only if
	all the valuated cocircuits of $w$ are contained in $L_{z}$.
	By Lemma \ref{lemma: generated-by-cocircuits},
	$L_w$ is generated as an $S$-module by its valuated cocircuits,
	and tropical hyperplanes are $S$-modules.
\end{proof}

The equations \eqref{equation: tropical-incidence-relations} are known as the \emph{tropical incidence relations}.

\subsection{Exterior algebra, and operations on Pl\"ucker vectors}


In \cite{giansiracusa17}, an analog of the exterior algebra for idempotent semirings was introduced,
providing a convenient framework for the algebra of tropical Pl\"ucker vectors.
If $S$ is an idempotent semifield, the \emph{tropical Grassmann algebra} or \emph{exterior algebra} on $S^E$
is the $S$-algebra quotient of the symmetric algebra on $S^E$ (defined in the usual way) by the relations $e_i^2 \sim 0$.
The exterior algebra is denoted by $\bigwedge S^E$, and its multiplication will be denoted by $\wedge$. 
If $I = \{i_1,\ldots, i_d\} \subset E$, let $e_I := e_{i_1} \wedge e_{i_2} \wedge\cdots \wedge e_{i_d}$.
The $d$th graded piece $\bigwedge^d S^E$ is free with basis 
$\left\{e_I\mid I \subseteq E, |I|=d\right\}.$
Hence, we may consider a tropical Pl\"ucker vector of rank $d$ as a vector in $\bigwedge^d S^E$.

There are three operations of interest on the exterior algebra that preserve the tropical Pl\"ucker relations.
The first is the multiplication $\wedge$.
The second is the Hodge star $\star: \bigwedge^d S^E \to \bigwedge^{|E|-d} (S^E)^\dual$,
which maps $e_I$ to $x_{E-I}$.
Combining these gives the third operation 
$\cdot: \bigwedge^d S^E \times \bigwedge^{d'} S^n \to \bigwedge^{d+d' - n} S^E$
defined by $w \cdot w' = \star(\star w \wedge \star w')$.

\begin{lemma}\label{lemma: plucker-ops} 
	Let $S$ be a totally ordered idempotent semifield.
	\begin{enumerate}[(a)]
		\item{If $w \in \bigwedge^d S^E$ and $w' \in \bigwedge^{d'} S^E$ are tropical Pl\"ucker vectors
			such that $w \wedge w' \neq 0$, then $w \wedge w'$ is a tropical Pl\"ucker vector.}
		\item{If $w \in \bigwedge^d S^E$ is a tropical Pl\"ucker vector, then so is $\star w$.}
		\item{If $w \in \bigwedge^d S^E$ and $w' \in \bigwedge^{d'} S^E$ are tropical Pl\"ucker vectors
		such that $w \cdot w' \neq 0$, then $w \cdot w'$ is a tropical Pl\"ucker vector.}
	\end{enumerate}
\end{lemma}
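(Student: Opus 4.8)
The plan is to treat the three parts in the order (b), (c), (a), since the Hodge star is the most transparent to analyze and it lets me reduce (c) to (a). Throughout I would use the ``bend'' formulation: a vector $w \in \bigwedge^d S^E$ is a tropical Pl\"ucker vector exactly when, for every $J \in \binom{E}{d+1}$ and $K \in \binom{E}{d-1}$, the maximum of the terms $w_{J-i}w_{K+i}$ over $i \in J-K$ is attained at least twice, which is what the displayed equalities in the definition encode.

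For (b), the key observation is that $\star$ merely relabels coordinates by complementation, $(\star w)_{K} = w_{E-K}$, where $\star w$ has rank $e = |E| - d$. Given indices $J' \in \binom{E}{e+1}$ and $K' \in \binom{E}{e-1}$ for the rank-$e$ relation of $\star w$, I would set $J = E - K'$ (of size $d+1$) and $K = E - J'$ (of size $d-1$). Then $i \in J'-K'$ if and only if $i \in J-K$, and a direct check gives $(\star w)_{J'-i}(\star w)_{K'+i} = w_{K+i}w_{J-i}$. Hence the relation of $\star w$ indexed by $(J',K')$ is term-for-term the relation of $w$ indexed by $(J,K)$, so $\star w$ inherits the bend property and (b) holds.

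For (c), since $\star$ is a bijection on basis vectors it is injective, so $w \cdot w' \neq 0$ precisely when $\star w \wedge \star w' \neq 0$. Granting (a) in the free module $(S^E)^\dual$ (which is again free of rank $|E|$, so the statement applies verbatim), I argue as follows: $\star w$ and $\star w'$ are tropical Pl\"ucker vectors by (b), their wedge is a tropical Pl\"ucker vector by (a), and applying (b) a second time shows $w \cdot w' = \star(\star w \wedge \star w')$ is a tropical Pl\"ucker vector. Thus (c) is formal once (a) and (b) are known.

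The real content is (a). Writing $u = w \wedge w'$, so that $u_I = \sum_{A \sqcup B = I} w_A w'_B$ with $|A| = d$ and $|B| = d'$, I fix $J \in \binom{E}{r+1}$ and $K \in \binom{E}{r-1}$ with $r = d+d'$, and expand each factor. The quantity $\max_{i \in J-K}\bigl(u_{J-i}u_{K+i}\bigr)$ becomes a maximum, over choices of $i$ together with partitions $J-i = A \sqcup B$ and $K+i = C \sqcup D$, of the product $w_A w_C w'_B w'_D$; the goal is to show this maximum is attained for at least two distinct indices $i$. I would fix a maximizing configuration $(i^\ast, A, B, C, D)$ and use the exchange content of the Pl\"ucker relations for $w$ and for $w'$ separately, transferring $i^\ast$ between the $w$-part $\{A,C\}$ and the $w'$-part $\{B,D\}$ so as to rewrite the optimum with a different index. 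This is exactly the assertion that the wedge realizes the valuated matroid union of the two valuated matroids, and this is where I expect the main obstacle to lie: producing the second maximizer is not a one-line manipulation but requires a matching or augmenting argument on the symmetric differences of $A,C$ and of $B,D$. Once this exchange lemma is established, parts (b) and (c) follow as above.
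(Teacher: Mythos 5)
Your parts (b) and (c) are correct, and they match the paper's treatment. The paper's entire proof of (b) is the remark that it ``can be checked from the definition,'' and your complementation computation is exactly that check: with $J = E - K'$ and $K = E - J'$ you get $(\star w)_{J'-i}(\star w)_{K'+i} = w_{J-i}w_{K+i}$ and $J'-K' = J-K$, so each bend relation of $\star w$ is term-for-term a bend relation of $w$. Likewise, your (c) is the same formal deduction from (a) and (b) that the paper uses, and you correctly note the two points that make it legitimate: $\star$ is a coordinate bijection, so $w \cdot w' \neq 0$ forces $\star w \wedge \star w' \neq 0$, and $(S^E)^\dual$ is again free, so (a) applies there verbatim.

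The genuine gap is (a), and it is the whole content of the lemma. You set up the expansion of the Plücker relation for $w \wedge w'$, but then you explicitly defer the decisive step (``producing the second maximizer \ldots requires a matching or augmenting argument'') and never supply it; a named-but-unproven exchange lemma is not a proof, and since your (c) rests on (a), the proposal is incomplete at its core. Statement (a) is indeed nontrivial --- it is the well-definedness of the stable sum of tropical linear spaces --- and the paper does not reprove it either: it simply cites \cite[Proposition 5.1.2]{giansiracusa17}. A blind attempt must therefore either invoke that result or actually prove it, and yours does neither. For what it is worth, the published argument is not the maximizer-exchange you anticipated. In the bend-relations formulation one expands $\sum_{i \in J-K}(w\wedge w')_{J-i}(w\wedge w')_{K+i}$ as a double sum over partitions, collects terms according to whether the transferred index $i$ lands in the $w$-factor or the $w'$-factor, and regroups each group so that the inner sums are bend sums for $w$ alone and for $w'$ alone; the bend relations for $w$ and $w'$ then show that dropping the terms indexed by any fixed $j$ leaves the expression unchanged. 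The paper exhibits precisely this style of regrouping in its proof of Lemma \ref{lemma: wedge-is-monotonic}(b), which it describes as a generalization of the cited proposition. So the missing step is closable, but by a regrouping of idempotent sums rather than by the combinatorial matching argument you sketched, and as written your proposal does not close it.
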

\begin{proof}
	(a) is \cite[Proposition 5.1.2]{giansiracusa17}. (b) can be checked from the definition, and (c) follows from (a) and (b).
\end{proof}

\begin{remark}
	For $\varphi \in (S^E)^\dual$, the tropical linear space $L_{\star \varphi}$ is the tropical hyperplane defined by $\varphi$,
	for 
	\begin{equation*}
		\sum_{i \in E} (\star \varphi)_{E-i} x_i = \sum_{i \in E} \varphi(e_i) x_i = \varphi,
	\end{equation*}
	and hence $\sum_{i=1}^n c_ie_i$ is in the hyperplane defined by $\varphi$ if and only if it is in the tropical linear space with tropical Pl\"ucker coordinates $\star\varphi$.
\end{remark}

All of these operations, when they result in a nonzero vector, have geometric interpretations: 
$L_{w \wedge w'}$ is the stable sum of $L_w$ and $L_{w'}$ \cite{fink15},
$L_{\star w}$ is the tropical orthogonal dual of $L_w$, and $L_{w \cdot w'}$ is the stable intersection of $L_w$ and $L_{w'}$ \cite{speyer08}.
Speyer introduced the stable intersection over $\T$ when all tropical Pl\"ucker coordinates are non-zero
\cite{speyer08} and gave a geometric interpretation in terms of polyhedral complexes.

Geometrically, we would expect the orthogonal dual reverses inclusions, while stable sum and stable intersection should preserve inclusions.
For tropical linear spaces over $\T$, this follows immediately from the polyhedral geometry of the operations.
Here, we provide an algebraic proof that works over any totally ordered idempotent semifield.
\begin{lemma} \label{lemma: wedge-is-monotonic}
	Let $S$ be a totally ordered idempotent semifield.
	\begin{enumerate}[(a)]
		\item{If $w \in \bigwedge^d S^E$ and $z \in \bigwedge^{e} S^E$ are tropical Pl\"ucker vectors
			and $L_w \subseteq L_{z}$, then $L_{\star z} \subseteq L_{\star w}$.}
		\item{If $w \in \bigwedge^d S^E, w' \in \bigwedge^{d'}S^E, z \in \bigwedge^e S^E,$ and $z' \in \bigwedge^{e'} S^E$
			are tropical Pl\"ucker vectors such that $z \wedge z' \neq 0$,
			\begin{equation*}
				L_w \subseteq L_z \qquad \text{and} \qquad L_{w'} \subseteq L_{z'},
			\end{equation*}
			then $w \wedge w' \neq 0$, and $L_{w \wedge w'} \subseteq L_{z \wedge z'}$.
		}
		\item{If $w \in \bigwedge^d S^E, w' \in \bigwedge^{d'}S^E, z \in \bigwedge^e S^E,$ and $z' \in \bigwedge^{e'} S^E$
			are tropical Pl\"ucker vectors such that $w \cdot w' \neq 0$,
			\begin{equation*}
				L_w \subseteq L_z \qquad \text{and} \qquad L_{w'} \subseteq L_{z'},
			\end{equation*}
			then $z \cdot z' \neq 0$, and $L_{w \cdot w'} \subseteq L_{z \cdot z'}$.
		}
	\end{enumerate}
\end{lemma}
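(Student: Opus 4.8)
The plan is to establish (a) and (b) directly and deduce (c) from them by Hodge duality. Since $\star$ merely permutes coordinates, $\star v\neq 0$ if and only if $v\neq 0$, so the hypothesis $w\cdot w'=\star(\star w\wedge\star w')\neq 0$ is equivalent to $\star w\wedge\star w'\neq 0$. Granting (a) and (b): from $L_w\subseteq L_z$ and $L_{w'}\subseteq L_{z'}$, part (a) gives $L_{\star z}\subseteq L_{\star w}$ and $L_{\star z'}\subseteq L_{\star w'}$; feeding these four vectors into (b), with $\star w\wedge\star w'\neq 0$ serving as the nonvanishing hypothesis, yields $\star z\wedge\star z'\neq 0$ --- that is, $z\cdot z'\neq 0$ --- together with $L_{\star z\wedge\star z'}\subseteq L_{\star w\wedge\star w'}$. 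Applying (a) to this last containment produces $L_{w\cdot w'}\subseteq L_{z\cdot z'}$, since $\star(\star w\wedge\star w')=w\cdot w'$ and $\star(\star z\wedge\star z')=z\cdot z'$ by definition; this is (c). (Each $\star$ of a Pl\"ucker vector is again one by Lemma \ref{lemma: plucker-ops}(b), and (a), (b) apply verbatim in the dual module.) It thus remains to prove (a) and (b).

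For (a) I would simply relabel inside the tropical incidence relations \eqref{equation: tropical-incidence-relations}. Set $n=|E|$, so $\star z$ has rank $n-e$ and $\star w$ has rank $n-d$. By Lemma \ref{lemma: tropical-incidence-relations}, $L_{\star z}\subseteq L_{\star w}$ asserts that for all $A\in\binom{E}{n-d+1}$, $B\in\binom{E}{n-e-1}$ and $j\in A-B$ the family $\{(\star w)_{A-i}(\star z)_{B+i}\}_{i\in A-B}$ satisfies its bend relation. Since $(\star w)_{A-i}=w_{(E-A)+i}$ and $(\star z)_{B+i}=z_{(E-B)-i}$, putting $A'=E-B\in\binom{E}{e+1}$ and $B'=E-A\in\binom{E}{d-1}$ rewrites the $i$-th term as $z_{A'-i}w_{B'+i}$; moreover $A-B=A'-B'$. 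Thus the required bend relation is \emph{literally} the incidence relation for $L_w\subseteq L_z$ indexed by $(A',B',j)$, and as $(A,B)$ ranges over all admissible pairs so does $(A',B')$. The hypothesis therefore supplies every relation needed.

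For (b) I would separate nonvanishing from containment and reduce both to the \emph{one-sided} statement: \emph{if $L_w\subseteq L_z$ and $z\wedge u\neq 0$, then $w\wedge u\neq 0$ and $L_{w\wedge u}\subseteq L_{z\wedge u}$.} Because $\wedge$ is commutative here (the exterior algebra is a quotient of the symmetric algebra), applying this twice --- varying the first factor with $u=z'$, then the second with $u=w$ --- chains $L_{w\wedge w'}\subseteq L_{w\wedge z'}\subseteq L_{z\wedge z'}$ and produces $w\wedge w'\neq 0$ along the way. Nonvanishing in the one-sided statement is matroidal: $L_w\subseteq L_z$ forces the underlying matroid $M_w$ to be a quotient of $M_z$, so every basis of $M_z$ contains a basis of $M_w$; hence a disjoint pair of bases for $(M_z,M_u)$ supplied by $z\wedge u\neq 0$ restricts to a disjoint pair for $(M_w,M_u)$, giving $w\wedge u\neq 0$. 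For the containment I would expand $(z\wedge u)_{A-i}$ and $(w\wedge u)_{B+i}$ by definition and group the resulting terms by their passive parts, so that each group is controlled either by an incidence relation of $(w,z)$ or by a tropical Pl\"ucker relation of $u$. The engine is that a \emph{sum} of families each obeying its bend relation again obeys the bend relation (immediate since the canonical order is compatible with addition), so group-wise bend relations assemble into the bend relation for the whole sum. This already settles the prototype $L_u\subseteq L_{u\wedge u'}$ cleanly: grouping $\sum_i(u\wedge u')_{A-i}\,u_{B+i}$ by the $u'$-part $S'$ displays each group as $u'_{S'}$ times a tropical Pl\"ucker relation of $u$, with index ranges matching exactly because only one side carries a wedge.

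The step I expect to be the main obstacle is precisely this grouping once \emph{both} factors carry a wedge and $A\cap B\neq\emptyset$. Unlike the prototype, in $L_{w\wedge u}\subseteq L_{z\wedge u}$ the copies of $u$ sit on both sides, and when the swap index falls in the shared support the range of $i$ actually occurring in the sum is a \emph{proper} subset of the index set of the relation attached to a group: that relation carries spurious summands with $i\in A\cap B$ which are not summands of the original sum. A bend relation of the group can then be witnessed by such a spurious term, and the sum-of-bends principle does not close on its own. My plan to repair this is to show that every spurious witness is dominated by a genuine term: a spurious witness on the $(w,z)$ side, where the swap index has been forced into the overlap and hence into a $u$-block, should be matched, through a second exchange governed by the tropical Pl\"ucker relations of $u$, to a legitimate term in which the swap is carried off the overlap. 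Making this simultaneous double exchange uniform in $A\cap B$ is the technical heart of (b); granting it, everything else is bookkeeping between the two bend relations.
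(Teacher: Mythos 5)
Parts (a) and (c) of your argument are correct and coincide with the paper's own treatment: your relabeling $A' = E-B$, $B' = E-A$ in (a) is exactly the index-complementation the paper uses to show that $w,z$ satisfy the incidence relations \eqref{equation: tropical-incidence-relations} if and only if $\star z, \star w$ do, and (c) is deduced from (a) and (b) by precisely the duality bookkeeping you describe. Within (b), two of your ingredients are also sound: the reduction to the one-sided statement via the chain $L_{w\wedge w'} \subseteq L_{w\wedge z'} \subseteq L_{z\wedge z'}$ is valid, and your non-vanishing argument (underlying matroids related by a quotient, so every basis of the matroid of $z$ contains a basis of the matroid of $w$, and a disjoint pair of bases for $(z,u)$ restricts to one for $(w,u)$) is a legitimate, self-contained substitute for the paper's citation of Crowley et al.

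The genuine gap is the one you name yourself and then defer: the containment in (b) is never proved, and the deferred step is not bookkeeping but the entire content. Concretely, expanding $\sum_{i\in A-B}(z\wedge z')_{A-i}(w\wedge w')_{B+i}$ and grouping by passive parts $(J,C',K,D')$ with $J\sqcup C' = A$, $K \sqcup D' = B$, the terms actually present have inner index $i \in J-B$, whereas the incidence relation of Lemma \ref{lemma: tropical-incidence-relations} that controls each group runs over all $i \in J-K$; the discrepancy $i \in J\cap D' \subseteq A\cap B$ consists of terms $z_{J-i}w_{K+i}z'_{C'}w'_{D'}$ with no counterpart in the original sum, and the bend of a group can be witnessed by exactly such a term. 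Your proposed repair --- a second exchange governed by the relations of the other factor --- produces a term at least as large, but its exchange index can land back inside $B$ (namely in $K$), producing a spurious term of the opposite kind; the process can bounce between the two factors indefinitely, and since the total order may have ties, nothing you have said rules out cycling, so termination requires an actual argument (for instance a maximal-witness choice together with a potential). Until that is supplied, the one-sided statement, and hence (b), is unproven. It is worth recording that the paper's proof is terse at the identical spot: its regrouped expression following \eqref{equation: wedge-incidence} has inner sums over the full ranges $i \in J-K$ and $i \in J'-K'$, and the asserted equality with \eqref{equation: wedge-incidence} is a term-by-term bijection only when $A \cap B = \emptyset$; for overlapping $A$ and $B$ it tacitly includes the same spurious terms whose domination by genuine terms is the issue. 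So you have correctly isolated the only step in this lemma requiring a real idea, but isolating it is not supplying it, and as submitted your proof of (b) is incomplete.
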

\begin{proof}
	(a) follows from Lemma \ref{lemma: tropical-incidence-relations}
	by observing that $w$ and $z$ satisfy the tropical incidence relations
	if and only if $\star z$ and $\star w$ do, since
	\begin{equation*}
		\sum_{i \in A - B} z_{A-i}w_{B+i} = \sum_{i \in B^c - A^c} (\star w)_{B^c -i} (\star z)_{A^c + i}
	\end{equation*}
	and similarly dropping a term from the right-hand side is exactly dropping a term from the left-hand side.
	Alternatively, (a) follows from the characterization of tropical orthogonal duality of \cite[Corollary 4.4.4]{giansiracusa17}.

	The proof of (b) is a generalization of \cite[Proposition 5.1.2]{giansiracusa17},
	making use of Lemma \ref{lemma: tropical-incidence-relations}.
	That $w \wedge w' \neq 0$ follows from \cite[Theorem 6.5]{crowley17}.
	Now suppose that $A \in \binom{E}{e+e' + 1}$ and $B \in \binom{E}{d+d' - 1}$.
	Then 
	\begin{equation} \label{equation: wedge-incidence}
		\sum_{i \in A - B} (z \wedge z')_{A -i} (w \wedge w')_{B+i} 
		= \sum_{i \in A - B} \sum_{\substack{C \sqcup C' = A -i \\ D \sqcup D' = B+i}} z_Cz'_{C'}w_Dw'_{D'}.
	\end{equation}
	Collecting terms with $i \in D$ and $i \in D'$ separately, and regrouping, gives the equal expression
	\begin{equation*}
		\sum_{\substack{J \sqcup C' = A \\ K \sqcup D' = B}} (z'_{C'}w'_{D'}) \! \left(\sum_{i\in J -K} z_{J-i}w_{K+i}\right)
		+ \sum_{\substack{C \sqcup J' = A \\ D \sqcup K' = B}} (z_Cw_D)\! \left(\sum_{i \in J'-K'} z'_{J'-i}w'_{K'+i}\right)
	\end{equation*}
	Further, the terms with fixed $j \in A-B$ in \eqref{equation: wedge-incidence}
	are exactly the terms with $j \in J-K$ or $j \in J'-K'$ in the above sums.
	Since $L_w \subseteq L_z$ and $L_{w'} \subseteq L_{z'}$, 
	Lemma \ref{lemma: tropical-incidence-relations} shows that the sums above are equal to the same sums with $j$ dropped.
	Hence, the tropical incidence relations for $w \wedge w'$ and $z \wedge z'$ are satisfied,
	so by Lemma \ref{lemma: tropical-incidence-relations} again, $L_{w \wedge w'} \subseteq L_{z \wedge z'}$.

	(c) follows from (a) and (b).
\end{proof}

\subsection{Matroids}
A \emph{matroid} $M$ on ground set $E$ (assumed to be finite) is a collection of \emph{bases}, subsets of $E$, satisfying the strong exchange axiom: if $I$ and $J$ are bases of $M$ and $i \in I - J$, then there exists $j \in J - I$ such that $I -i +j $ and $J - j + i$ are bases of $M$.
Matroids may be defined in a number of cryptomorphic ways, and standard matroid terminology will be used in various remarks and examples in this paper; in particular, at times we will recall the deletion $M\delete F$ and contraction $M \contract F$ for $F \subseteq E$.
See \cite{oxley92} for more details.

Matroids are equivalent to tropical linear spaces over the two-element idempotent semifield $\B = \{0,1\}$, 
with additive identity $0$ and multiplicative identity $1$:
a vector $w \in \bigwedge \B^E$ is a tropical Pl\"ucker vector if and only if it is the indicator vector of the bases of a matroid. 
See \cite{crowley17} for more on this correspondence. 
Indeed, any tropical linear space $L_w$ over an idempotent semifield $S$ defines a matroid,
by taking the bases to be those coordinates of $w$ that are non-zero.
This matroid is called the \emph{underlying matroid} of $L_w$. 
Another way to understand this construction from a tropical linear space $L_w$
is to take the push-forward under the canonical semiring homomorphism $S \to \B$ (i.e.\! the image under applying $S \to \B$ coordinatewise)
and then use the above correspondence.
See \cite[\S 4]{baker16} for more details on push-forwards in the context of matroids over hyperfields.

\subsection{Minors of tropical linear spaces}

Frenk defined the minors of a tropical linear space in \cite{frenk13}, 
following the definition of the minors of a valuated matroid given by Dress and Wenzel \cite{dress92}.
Minors of matroids over hyperfields were defined by Baker and Bowler \cite{baker16}.
The following is essentially \cite[Lemma 6.4]{baker16}, translated to the language of totally ordered idempotent semifields. 
It shows that coordinate subspaces and projections of tropical linear spaces are again tropical linear spaces.
\begin{lemma} \label{lemma: minor-plucker-coordinates}
	Let $S$ be a totally ordered idempotent semifield, and let $w \in \bigwedge^d S^E$ be a tropical Pl\"ucker vector.
	Let $M$ be the underlying matroid of $w$.
	Let $F \subseteq E$ be fixed, and let $J \subseteq E-F$.
	Define $z \in S^{\binom{F}{d-|J|}}$ by
	\[z_I = w_{J \cup I}.\]
	\begin{enumerate}[(a)]
		\item{If $|J| < \rank M\contract F$ or $|J| > \rank M\delete F$, then $z$ is zero.
		}
		\item{if $|J| = \rank M\contract F$, then $z$ is either zero 
			or the Pl\"ucker vector of $\pi_F(L_w)$,
			where $\pi_F: S^E \to S^F$ is the coordinate projection map;
		}
		\item{if $|J| = \rank M\delete F$, then $z$ is either zero
			or the Pl\"ucker vector of $S^F \cap L_w$.
		}
	\end{enumerate}
\end{lemma}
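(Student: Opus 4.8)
The plan is to reduce everything to the combinatorics of the underlying matroid $M$ together with the cocircuit description of Lemma~\ref{lemma: generated-by-cocircuits}. Part (a) is purely matroidal: $z_I = w_{J\cup I}$ is nonzero precisely when $J\cup I$ is a basis of $M$, hence in particular independent. If $|J| < \rank(M\contract F) = d - \rank_M(F)$, then every $I\in\binom{F}{d-|J|}$ has $|I| = d-|J| > \rank_M(F) \ge \rank_M(I)$, so $I$ is dependent and $J\cup I$ is never a basis; if $|J| > \rank(M\delete F) = \rank_M(E-F)$, then $J\subseteq E-F$ already satisfies $|J| > \rank_M(E-F) \ge \rank_M(J)$, so $J$ itself is dependent. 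Either way $z = 0$.

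For (b) and (c) I would first record that, when $z\neq 0$, it is a tropical Pl\"ucker vector: substituting $z_\bullet = w_{J\cup\bullet}$ into the Pl\"ucker relation for $z$ at a pair $(J',K')$ with $J',K'\subseteq F$ produces exactly the relation for $w$ at $(J\cup J', J\cup K')$, since $J$ is disjoint from $F$ and therefore $(J\cup J')-(J\cup K') = J'-K'$. Next, writing $C^K = \sum_{i\in E-K} w_{K+i}e_i$ for the valuated cocircuits, a direct computation shows that the cocircuit of $z$ at $K'\in\binom{F}{d-|J|-1}$ equals $\pi_F\!\left(C^{J\cup K'}\right)$. Since $L_z$ is generated by its cocircuits and $\pi_F$ is a module map, this at once gives $L_z \subseteq \pi_F(L_w)$ in case (b). In case (c), the hypothesis $|J| = \rank(M\delete F)$ forces $J$ to be a basis of $M\delete F$, so every $i\in(E-F)-J$ lies in the closure of $J$ and $w_{J\cup K'+i}=0$; thus $C^{J\cup K'}$ is supported on $F$, lies in $S^F\cap L_w$, and yields $L_z\subseteq S^F\cap L_w$.

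The reverse inclusions are where the rank hypotheses do the real work, and I expect them to be the main obstacle. For (b), $\pi_F(L_w)$ is generated by the projected cocircuits $\pi_F(C^K)$, so it suffices to check $\pi_F(C^K)\in L_z$ for every $K$, i.e.\ that $\pi_F(C^K)$ satisfies the bend relation of each hyperplane $\sum_{i\in J'} z_{J'-i}x_i$ with $J'\in\binom{F}{d-|J|+1}$. That bend relation is the partial sum $\sum_{i\in J'-K} w_{J\cup(J'-i)}w_{K+i}$ of the Pl\"ucker relation for $w$ at $(J\cup J', K)$; the missing terms are those with $i\in J-K$, carrying the factor $w_{(J-i)\cup J'}$. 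Here $|J| = \rank(M\contract F)$ gives $|J'| = \rank_M(F)+1 > \rank_M(J')$, so $J'$ is dependent and every missing term vanishes, leaving the full Pl\"ucker relation; hence the bend relation holds and $\pi_F(L_w)\subseteq L_z$. For (c) I would instead take $v\in S^F\cap L_w$ and evaluate the defining bend relation of $L_w$ at $A=J\cup J'$ on $v$: the terms indexed by $i\in J$ drop out because $v_i=0$ there, leaving precisely the bend relation for $L_z$ at $J'$, so $v\in L_z$ and $S^F\cap L_w\subseteq L_z$. Combining inclusions gives $L_z=\pi_F(L_w)$ in (b) and $L_z=S^F\cap L_w$ in (c); since these are then tropical linear spaces with Pl\"ucker vector $z$, the identification is complete.

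The crux, in both directions, is that the minor relations for $z$ are only a fragment of those for $w$ --- a truncated Pl\"ucker sum in (b), a hyperplane relation carrying extra coordinates in (c) --- and one must argue the discrepancy is inert. The single observation that makes this work is that each rank hypothesis forces a dependence in $M$, namely of $J'$ in case (b) and of $J$ relative to $E-F$ in case (c), which annihilates exactly the offending terms. Finally, I would remark that (b) and (c) are interchanged by tropical orthogonal duality (Lemma~\ref{lemma: wedge-is-monotonic}(a)), since $\star$ swaps $\pi_F$ with $S^F\cap(-)$ and deletion with contraction; this yields an alternative derivation of one case from the other once the compatibility of $\star$ with minors is set up, though the direct argument above avoids that bookkeeping.
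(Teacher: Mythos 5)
Your proof is correct, and it takes a genuinely different route from the paper's. The paper's argument for (a) is the same matroid observation as yours, but for (b) and (c) it does not argue from the definitions at all: it checks that nonvanishing of some coordinate $w_{I\cup J}$ forces $J$ to be a basis of $M\contract F$, then invokes the minor construction for matroids over tracts, namely \cite[Lemma 6.4]{baker16}, and obtains (c) from (b) by duality --- exactly the duality you mention in your closing remark as an ``alternative.'' You instead stay entirely inside the paper's semifield-module framework: you verify the Pl\"ucker relations for $z$ by restriction of those for $w$; you identify the valuated cocircuits of $z$ with the $\pi_F$-images of cocircuits of $w$, which together with Lemma \ref{lemma: generated-by-cocircuits} yields $L_z \subseteq \pi_F(L_w)$ in (b), and in (c) yields $L_z \subseteq S^F \cap L_w$ after noting those cocircuits are supported on $F$ because $J$ spans $E-F$; and you prove the reverse inclusions directly from the bend relations, using each rank hypothesis exactly once to annihilate the offending terms ($J'$ dependent in (b), $v_i = 0$ off $F$ in (c)). What the paper's proof buys is brevity, deferring the substance to the hyperfield literature at the cost of the translation between covector sets and submodules; what yours buys is a self-contained, elementary argument that in effect reproves \cite[Lemma 6.4]{baker16} in the totally ordered semifield setting and makes visible precisely where the hypotheses act. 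One small point to patch in (b): the bend relation must hold for every $j \in J'$, and your Pl\"ucker-relation argument covers only $j \in J'-K$; for $j \in J' \cap K$ it holds trivially since $v_j = 0$.
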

\begin{proof}
	(a)	If $|J| < \rank M\contract F$ or $|J| > \rank M\delete F$,
	then there is no $I \subseteq F$ such that $J \cup I$ is a basis for $M$.

	(b) If $|J| = \rank M\contract F$ and there is $I \subseteq F$ such that $w_{I\cup J} \neq 0$,
	then $|I| = \rank M \delete (E-F)$, so $J$ is a basis for $M \contract F$.
	Then apply \cite[Lemma 6.4]{baker16}.

	(c) This follows from the dual argument to (b) and the dual part of \cite[Lemma 6.4]{baker16}.
\end{proof}

\begin{remark}
Over $\B$, this lemma shows that if $M$ is the matroid associated to $L_w \subseteq \B^E$,
then $\pi_F(L_w)$ is the tropical linear space corresponding to the matroid $M\delete(E-F)$
and $\B^F \cap L_w$ is the tropical linear space corresponding to the matroid $M\contract(E-F)$.
See \cite[Theorem 4.1]{crowley17} for more details.
\end{remark}

\section{Linear extensions}

\subsection{Linear extensions and their underlying matroids}

In tropical geometry, graphs of regular functions on a balanced polyhedral complex are generally not balanced;
in particular, the graph of a linear function $\varphi \in (S^E)^\dual$ is typically not a tropical linear space.
However, there is a natural balanced polyhedral complex containing the set-theoretic graph.
Frenk, in his doctoral thesis, studied extensions of tropical linear spaces
over sub-semifields of $\T$, translating between polyhedral and algebraic definitions of this balanced polyhedral complex \cite[\S 4.2.2]{frenk13}. We generalize Frenk's construction, connecting it to matroidal notions and to the important operation of tropical modification. 
These methods allow us to construct an explicit example of a tropical linear space that is the tropical modification of a tropical linear space along a \textit{non-linear} rational function.  

\begin{definition}
An \emph{extension} of a tropical linear space $L_w \subseteq S^E$
is a tropical linear space $L_z \subseteq S^{E\cup P}$ for $P$ disjoint from $E$ such that $\pi_{E}(L_z) = L_w$.
An \emph{elementary} extension of $L_w \subseteq S^E$ is an extension $L_z \subseteq S^{E \cup P}$ such that $|P| = 1$.
\end{definition}

Because projection corresponds to the restricted matroid, the underlying matroid of an extension
is an extension of the underlying matroid.
Crapo studied elementary extensions of matroids, summarized in \cite[Chapter 7.2]{oxley92}.

For $\varphi \in (S^E)^\dual$, the tropical hyperplane defined by $\varphi + x_p \in (S^{E+p})^\dual$, 
which has tropical Pl\"ucker coordinates $\star_{E+p}(\varphi + x_p)$, 
contains the set-theoretic graph of $\varphi$.
This suggests defining the ``graph'' of $\varphi$ on a tropical linear space $L_w \subseteq S^E$ 
as the stable intersection of this tropical hyperplane with $L_w \oplus S$.

\begin{definition}
	Let $S$ be a totally ordered idempotent semifield.
	A \emph{linear extension} of a tropical linear space $L_w \subseteq S^E$
	is an elementary extension $L_z \subseteq S^{E + p}$ of $L_w$
	such that for some $\varphi \in (S^E)^\dual$, $z$ is of the form
	\begin{equation*}
		z= (w \wedge e_{p}) \cdot \star_{E+p}(\varphi + x_{p}) = \star_{E+p}(\star_E w \wedge (\varphi + x_p)).
	\end{equation*}
	Such an $L_z$ will be denoted $\linext{L_w}{\varphi}{p}$.
\end{definition}
By Lemma \ref{lemma: plucker-ops}, the vector $(w \wedge e_p) \cdot \star_{E+p}(\varphi + x_p)$ is always a tropical Pl\"ucker vector,
and for $I \subseteq E$ of size $\rank w$, $((w \wedge e_p) \cdot \star_{E+p}(\varphi + x_p))_I = w_I$.
Lemma \ref{lemma: minor-plucker-coordinates} thus shows that a linear extension is indeed an extension.

Given a matroid $M$ on ground set $E$, and given $X \subseteq E$,
the \emph{principal extension} of $M$ with respect to $X$,
denoted $\linext{M}{F}{p}$, can be described as the matroid on $E+p$ with independent sets
\begin{equation*}
	\{I \mid I \in \mathcal{I}(M)\} \cup \{I \cup p \mid I \in \mathcal{I}(M), \mathrm{cl}_M(I) \not\supseteq \mathrm{cl}_M(F)\},
\end{equation*}
where $\mathcal{I}(M)$ is the set of independent sets of $M$ \cite[Proposition 7.2.5]{oxley92}. 
\begin{lemma} \label{lemma: linear-extension-principal}
	Let $S$ be a totally ordered idempotent semifield, $L_w \subseteq S^E$ a tropical linear space with underlying matroid $M$, and $\varphi \in (S^E)^\dual$.
	If 
	\[F = \{i \in E \mid \varphi(e_i) \neq 0\},\] 
	then the underlying matroid of the linear extension $\linext{L_w}{\varphi}{p}$
	is the principal extension $\linext{M}{F}{p}$.
\end{lemma}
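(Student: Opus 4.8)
The plan is to compute the tropical Pl\"ucker coordinates of $z = \star_{E+p}(\star_E w \wedge (\varphi + x_p))$ explicitly, read off the underlying matroid of $L_z$ as the set of $B$ with $z_B \neq 0$, and then match this against the independent-set description of $\linext{M}{F}{p}$. First I would expand the wedge in the exterior algebra. Writing $\star_E w = \sum_{|I| = d} w_I x_{E-I}$ and $\varphi + x_p = \sum_{j \in E} \varphi(e_j) x_j + x_p$, the product $\star_E w \wedge (\varphi + x_p)$ splits into terms containing $x_p$ and terms not containing $x_p$; applying $\star_{E+p}$ and collecting by index set, I expect to obtain
\begin{equation*}
	z_B = w_B \quad (B \subseteq E,\ |B| = d), \qquad z_{B'+p} = \sum_{j \in E - B'} \varphi(e_j)\, w_{B'+j} \quad (B' \subseteq E,\ |B'| = d-1).
\end{equation*}
This is the bulk of the bookkeeping and recovers the already-noted fact that $z_I = w_I$ for $|I| = \rank w$.

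Next I would read off the underlying matroid. Because $S$ is a totally ordered idempotent semifield there is no additive cancellation, so a sum of elements of $S$ is nonzero exactly when some summand is nonzero. Hence the bases of the underlying matroid of $L_z$ are, on the one hand, the $B \subseteq E$ with $w_B \neq 0$, which are exactly the bases of $M$, and on the other hand the sets $B' + p$ for which there exists $j \in (E - B') \cap F$ with $B' + j$ a basis of $M$ (here I use $F = \{i : \varphi(e_i) \neq 0\}$).

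In parallel I would extract the bases of $\linext{M}{F}{p}$ from its independent-set description. Since $\mathrm{cl}_M(I) = E \supseteq \mathrm{cl}_M(F)$ whenever $I$ is a basis of $M$, no set $I + p$ with $I$ a basis is independent, so $\linext{M}{F}{p}$ has rank $\rank M = d$; its bases not containing $p$ are the bases of $M$, while its bases containing $p$ are the sets $B' + p$ with $B' \in \mathcal{I}(M)$, $|B'| = d-1$, and $\mathrm{cl}_M(B') \not\supseteq \mathrm{cl}_M(F)$. The two descriptions of bases not containing $p$ agree immediately, so the whole statement reduces to a single matroid-theoretic equivalence: for $|B'| = d - 1$, there is $j \in (E - B') \cap F$ with $B' + j$ a basis of $M$ if and only if $B'$ is independent and $\mathrm{cl}_M(B') \not\supseteq \mathrm{cl}_M(F)$.

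I expect this equivalence to be the main obstacle. The forward direction is clean: if $B' + j$ is a basis then $B'$ is independent of size $d - 1$ and $j \notin \mathrm{cl}_M(B')$, so the element $j \in F$ witnesses $\mathrm{cl}_M(F) \not\subseteq \mathrm{cl}_M(B')$. For the converse I would use that $\mathrm{cl}_M(B')$ is a flat: the containment $\mathrm{cl}_M(F) \subseteq \mathrm{cl}_M(B')$ is equivalent to $F \subseteq \mathrm{cl}_M(B')$, so $\mathrm{cl}_M(B') \not\supseteq \mathrm{cl}_M(F)$ yields some $j \in F - \mathrm{cl}_M(B')$, and then $B' + j$ is independent of size $d$, hence a basis of $M$. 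The subtle points to watch are this passage between $\mathrm{cl}_M(F)$ and $F$ via the flat property, and, in the coordinate computation, making sure exactly the right exterior-algebra terms are assigned to each $z_B$.
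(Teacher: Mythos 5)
Your proposal is correct and takes essentially the same route as the paper's proof: compute the Pl\"ucker coordinates of the extension (bases of $M$ for sets in $E$, and $z_{B'+p} = \sum_{j \in E-B'}\varphi(e_j)w_{B'+j}$, i.e.\ $\varphi$ applied to the valuated cocircuit at $B'$), use the absence of additive cancellation to see this is nonzero exactly when the cocircuit support $E - \mathrm{cl}_M(B')$ meets $F$, and match against the principal-extension description. The paper compresses the final matroid-theoretic equivalence into one sentence, whereas you spell out the passage between $F$ and $\mathrm{cl}_M(F)$ via the flat property of $\mathrm{cl}_M(B')$; the content is identical.
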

\begin{proof}
	The bases of $(w \wedge e_p) \cdot \star_{E+p}(\varphi + x_p)$
	are exactly $I$ when $I \subseteq E$ is a basis of $M$
	and $K \cup p$ when $\varphi(\sum_{i \in E - K} w_{K+i}e_i) \neq 0$.
	Hence, $K \cup p$ is a basis of $\linext{L_w}{\varphi}{p}$ if and only if the cocircuit of $M$ supported in $E- K$, i.e. $E - \mathrm{cl}_M(K)$, intersects $F$.
	Hence, the underlying matroid of $\linext{L_w}{\varphi}{p}$ is exactly the prinipcal extension $\linext{M}{F}{p}$.
\end{proof}

The following lemma shows that $\linext{L_w}{\varphi}{p}$ contains the set-theoretic graph of $\varphi$ on $L_w$.
\begin{lemma}\cite[Proposition 4.2.12]{frenk13} \label{lemma: linear-extension-set}
	Let $S$ be a totally ordered idempotent semifield,
	and $L_w \subseteq S^E$ be a tropical linear space.
	Let $\varphi \in (S^E)^\dual$.
	If $w \cdot \star \varphi \neq 0$, then 
	\begin{equation*}
		\linext{L_w}{\varphi}{p} = \{v + \varphi(v)e_p \mid v \in L_w\} \cup \{v+ae_p \mid v \in L_{w \cdot \star\varphi}, a \leq \varphi(v)\};
	\end{equation*}
	if $w \cdot \star \varphi = 0$, then 
	\begin{equation*}
		L_w +_\varphi p = \{v + \varphi(v)e_p \mid v \in L_w\}.
	\end{equation*}
\end{lemma}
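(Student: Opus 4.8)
The plan is to prove the set-theoretic description of $\linext{L_w}{\varphi}{p}$ by using the description of this extension as a stable intersection together with Lemma \ref{lemma: generated-by-cocircuits} and the explicit combinatorics of the bend relations for a single tropical hyperplane. First I would recall that $\linext{L_w}{\varphi}{p}$ is by construction the tropical linear space whose Pl\"ucker vector is $z = (w \wedge e_p) \cdot \star_{E+p}(\varphi + x_p)$, and that this equals the stable intersection of $L_w \oplus S$ (the span of $L_w$ and $e_p$ inside $S^{E+p}$, with Pl\"ucker vector $w \wedge e_p$) with the tropical hyperplane $H$ defined by $\varphi + x_p$, whose Pl\"ucker vector is $\star_{E+p}(\varphi + x_p)$. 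The containment of the genuine graph $\{v + \varphi(v)e_p \mid v \in L_w\}$ is the easy direction: for such a point the form $\varphi + x_p$ evaluates to $\varphi(v) + \varphi(v) = \varphi(v)$ (using idempotency and the definition of the tropical hyperplane via ``max attained twice''), so every graph point lies in $H$, and it plainly lies in $L_w \oplus S$; since stable intersection contains the genuine intersection whenever the Pl\"ucker product is nonzero, these points lie in $\linext{L_w}{\varphi}{p}$.

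The substance is the reverse inclusion and the identification of the second family. I would first compute the valuated cocircuits of $z$ directly from Lemma \ref{lemma: generated-by-cocircuits}, since $\linext{L_w}{\varphi}{p}$ is generated by them. Splitting each cocircuit indexing set $K \in \binom{E+p}{\rank z - 1}$ according to whether $p \in K$, and using the explicit formula for the coordinates $z_{K+i} = ((w\wedge e_p)\cdot\star_{E+p}(\varphi+x_p))_{K+i}$ derived in the remarks following the definition, I expect the cocircuits to fall into two shapes: those that are the graph lift $c + \varphi(c)e_p$ of a cocircuit $c$ of $w$, and those supported in $E$ alone, which are cocircuits of $w \cdot \star\varphi$ (the stable intersection of $L_w$ with the hyperplane of $\varphi$, whose Pl\"ucker vector is exactly $w\cdot\star\varphi$) sitting at $e_p$-coordinate $0 = -\infty$. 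Taking $S$-linear combinations of these generators and carefully tracking the $e_p$-coordinate against $\varphi$ evaluated on the $E$-part then yields exactly the two advertised families, with the inequality $a \le \varphi(v)$ emerging from idempotent addition absorbing the lower of two unequal terms.

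The main obstacle will be the bookkeeping in the case $w \cdot \star\varphi \neq 0$: I must show not only that arbitrary combinations of the two cocircuit types land inside the stated union, but also the nontrivial closure statement that the union is already the full module generated by the cocircuits — in particular that adding a ``$v + a e_p$ with $a \le \varphi(v)$'' point and a genuine graph point again produces a point of one of the two forms. The delicate point is that for $v \in L_{w\cdot\star\varphi}$ the value $\varphi(v)$ can be attained by the maximal term more than once, so the slack coordinate $a$ may be any element $\le \varphi(v)$, and I must check the edge case where a sum of two graph points $v + \varphi(v)e_p$ and $v' + \varphi(v')e_p$ with $\varphi(v) = \varphi(v')$ produces a point whose $e_p$-coordinate can drop strictly below $\varphi(v+v')$ — this is precisely the mechanism feeding the second family, and confirming it requires the hypothesis $w\cdot\star\varphi\neq 0$, equivalently $L_{w\cdot\star\varphi} \neq \emptyset$. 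For the degenerate case $w \cdot \star\varphi = 0$, I would argue that the hyperplane $H$ meets $L_w \oplus S$ only along the honest graph because the stable intersection has strictly smaller rank, so no slack family appears and only the first set survives.
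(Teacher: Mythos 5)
Your overall skeleton does match the paper's proof: both compute the valuated cocircuits of $z=(w\wedge e_p)\cdot\star_{E+p}(\varphi+x_p)$ via Lemma \ref{lemma: generated-by-cocircuits}, observe that they split into graph lifts $c+\varphi(c)e_p$ of cocircuits of $w$ (for indexing sets $A\subseteq E$) and cocircuits of $w\cdot\star\varphi$ placed at $e_p$-coordinate $0$ (for indexing sets containing $p$), and then analyze the module they span. But two of your key steps are false as stated. First, your ``easy direction'' rests on the claim that the stable intersection \emph{contains} the genuine set-theoretic intersection; the containment goes the other way. The stable intersection is a limit of perturbed intersections and is in general strictly smaller: for example, the stable self-intersection of the diagonal $L_{e_1+e_2}\subseteq S^2$ has Pl\"ucker vector $(e_1+e_2)\cdot(e_1+e_2)=1\in\bigwedge^0 S^2$, i.e.\ it is the single point $0$, not the whole diagonal. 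So showing that graph points lie in $(L_w\oplus S)\cap H$ proves nothing about membership in $\linext{L_w}{\varphi}{p}$. The paper instead proves the containment of the graph by writing $v\in L_w$ as an $S$-linear combination of cocircuits of $L_w$ and using linearity of $\varphi$: the same combination of the lifted cocircuits, which \emph{are} cocircuits of $z$, equals $v+\varphi(v)e_p$.

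Second, the mechanism you propose as ``precisely the mechanism feeding the second family'' cannot occur: you claim a sum of graph points $v+\varphi(v)e_p$ and $v'+\varphi(v')e_p$ with $\varphi(v)=\varphi(v')$ can have $e_p$-coordinate strictly below $\varphi(v+v')$. Idempotent addition has no cancellation, and by linearity $\varphi(v)+\varphi(v')=\varphi(v+v')$, so a sum of graph points is again exactly a graph point; indeed the set-theoretic graph is itself a submodule. The slack points enter from elsewhere: the cocircuits of $z$ indexed by sets containing $p$ are supported in $E$, hence are already slack points (cocircuits of $L_{w\cdot\star\varphi}$ with $e_p$-coordinate $0\leq\varphi$), and the paper then exhibits an arbitrary slack point, for $\varphi(v)\neq 0$ and $a\leq\varphi(v)$, via the identity
\begin{equation*}
	v+ae_p \;=\; a\varphi(v)^{-1}\bigl(v+\varphi(v)e_p\bigr) + v,
\end{equation*}
a combination of a graph point and a point of $L_{w\cdot\star\varphi}$, both already known to lie in $\linext{L_w}{\varphi}{p}$. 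Without this identity or a substitute, your plan has no valid route to the reverse inclusion for the second family, and the closure bookkeeping you describe for the forward inclusion is built on the same incorrect arithmetic. Two smaller points: $w\cdot\star\varphi\neq 0$ is not equivalent to $L_{w\cdot\star\varphi}\neq\emptyset$ (when the vector vanishes no linear space is associated to it at all, and a tropical linear space always contains $0$); and the degenerate case $w\cdot\star\varphi=0$ needs no rank argument --- there the cocircuits indexed by sets containing $p$ vanish, so all cocircuits of $z$ are graph lifts, and their span is the graph because the graph is a submodule.
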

\begin{proof}
	Let $z = (w \wedge e_p) \cdot \star_{E+p}(\varphi + x_p)$.
	First suppose that $w \cdot \star \varphi \neq 0$. 
	Let $L_w$ be rank $d$
	If $A \subseteq E+p$ is of size $d-1$ and contains $p$,
	then
	\begin{align*}
		\sum_{i \in (E+p) - A} z_{A+i}e_i &= \sum_{i \in (E+p)-A} (\star_E w \wedge (\varphi + x_p))_{(E+p)-A-i}e_i \\
		&= \sum_{i \in E - (A-p)} \star_{E}(\star_E w \wedge \varphi)_{A-p+i} e_i,
	\end{align*}
	which is the cocircuit of $L_{w \cdot \star \varphi}$
	associated to $A-p$.
	If $A \subseteq E$ is of size $d-1$,
	\begin{align*}
		\sum_{i \in (E+p)-A} z_{A+i}e_i &=  \sum_{i \in E -A} (\star_E w \wedge (\varphi + x_p))_{(E+p) -A-i}e_i \\
				&\qquad + (\star_E w \wedge (\varphi + x_p))_{E-A}e_p \\
		&= \sum_{i \in E - A} w_{A+i}e_i + \varphi\left(\sum_{i \in E-A} w_{A+i}e_i\right)e_p, 
	\end{align*}
	which is exactly  $v+ \varphi(v)e_p$ for $v$ the cocircuit of $L_w$ associated to $A$.
	If $w \cdot \star \varphi \neq 0$,
	the same arguments show that the cociruits of $z$ are exactly $v + \varphi(v)e_p$ for $v$ a cocircuit of $L_w$,
	and hence are included in the right-hand side.

	Now we prove the reverse inclusion. 
	If $v \in L_w$, then $v$ is a linear combination of cocircuits of $L_w$.
	Since $\varphi$ is linear, 
	$v+ \varphi(v)e_p$ is thus a linear combination of terms $v'+ \varphi(v')e_p$ where $v'$ is a cocircuit of $L_w$.
	Hence, $v + \varphi(v)e_p \in L_w +_\varphi p$.
	Now suppose that $w \cdot \star \varphi \neq 0$, $v \in L_{w \cdot\star\varphi}$, 
	and $a \leq \varphi(v)$.
	If $\varphi(v)= 0$, there is nothing to prove, so assume $\varphi(v) \neq 0$.
	Then 
	\begin{equation*}
		v+ ae_p = a\varphi(v)^{-1}(v + \varphi(v)e_p) + v
	\end{equation*}
	since addition is idempotent, showing that $v+ae_p \in L_w +_\varphi p$.
\end{proof}
A basic property of linear extensions is monotonicity:
\begin{lemma}\label{lemma: linear-extension-monotonic}
	Let $S$ be a totally ordered idempotent semifield, and 
	$L_w \subseteq L_z \subseteq S^E$ be tropical linear spaces.
	Let $\varphi \in (S^E)^\dual$.
	Then 
	\begin{equation*}
		\linext{L_w}{\varphi}{p} \subseteq \linext{L_z}{\varphi}{p}.
	\end{equation*}
\end{lemma}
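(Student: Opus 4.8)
The plan is to realize the linear extension as the composition of two operations, $w \mapsto w \wedge e_p$ followed by $u \mapsto u \cdot \star_{E+p}(\varphi + x_p)$, each of which is monotonic by Lemma \ref{lemma: wedge-is-monotonic}. Writing the two linear-extension Pl\"ucker vectors as $u = (w \wedge e_p) \cdot \star_{E+p}(\varphi + x_p)$ and $u' = (z \wedge e_p) \cdot \star_{E+p}(\varphi + x_p)$, the desired inclusion $\linext{L_w}{\varphi}{p} \subseteq \linext{L_z}{\varphi}{p}$ is precisely $L_u \subseteq L_{u'}$, and I would obtain it by chaining parts (b) and (c) of Lemma \ref{lemma: wedge-is-monotonic}. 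Here $e_p$ is a rank-one Pl\"ucker vector, and $\star_{E+p}(\varphi + x_p)$ is a Pl\"ucker vector by Lemma \ref{lemma: plucker-ops}(b) applied to the rank-one form $\varphi + x_p$, so both auxiliary factors are legitimate inputs.

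First I would promote the hypothesis $L_w \subseteq L_z$ from $S^E$ to $S^{E+p}$. By Lemma \ref{lemma: tropical-incidence-relations} this containment is equivalent to the tropical incidence relations for $w$ and $z$ on ground set $E$; since $w$ and $z$ are supported on subsets of $E$, every incidence relation on the enlarged ground set $E+p$ whose index sets meet $p$ has all of its terms equal to $0$ and so holds trivially, while those with index sets contained in $E$ are the original relations. Hence $L_w \subseteq L_z$ persists in $S^{E+p}$. Taking $w' = z' = e_p$ in Lemma \ref{lemma: wedge-is-monotonic}(b)---which applies because $z \wedge e_p \neq 0$, as $p \notin E$ forces $(z \wedge e_p)_{I+p} = z_I$ and $z$ has a nonzero coordinate---yields $w \wedge e_p \neq 0$ and $L_{w \wedge e_p} \subseteq L_{z \wedge e_p}$.

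Next I would feed this into Lemma \ref{lemma: wedge-is-monotonic}(c), with $w \wedge e_p$ and $z \wedge e_p$ in the roles of the two related Pl\"ucker vectors and $\star_{E+p}(\varphi + x_p)$ in the role of the common second factor, so that the trivial containment $L_{\star_{E+p}(\varphi + x_p)} \subseteq L_{\star_{E+p}(\varphi + x_p)}$ supplies the second hypothesis. The required nonvanishing $u = (w \wedge e_p) \cdot \star_{E+p}(\varphi + x_p) \neq 0$ is exactly the fact, recorded after the definition of the linear extension, that a linear-extension Pl\"ucker vector is always a nonzero tropical Pl\"ucker vector. Lemma \ref{lemma: wedge-is-monotonic}(c) then delivers $u' \neq 0$ together with $L_u \subseteq L_{u'}$, which is the claim. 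The real content is packaged in the monotonicity of $\wedge$ and $\cdot$, so the only points demanding care---and hence the closest thing to an obstacle---are verifying the two nonvanishing hypotheses $z \wedge e_p \neq 0$ and $(w \wedge e_p) \cdot \star_{E+p}(\varphi + x_p) \neq 0$, and checking that the inclusion $L_w \subseteq L_z$ is unaffected by passing to the larger ground set $S^{E+p}$; all three are immediate here.
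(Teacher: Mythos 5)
Your proof is correct and is essentially the paper's own argument: the paper likewise obtains the inclusion by applying Lemma \ref{lemma: wedge-is-monotonic}(b) to get $L_{w \wedge e_p} \subseteq L_{z \wedge e_p}$ and then Lemma \ref{lemma: wedge-is-monotonic}(c) with the common factor $\star_{E+p}(\varphi + x_p)$. The only difference is that you spell out details the paper leaves implicit---transporting $L_w \subseteq L_z$ to the ground set $E+p$ and verifying the nonvanishing hypotheses $z \wedge e_p \neq 0$ and $(w \wedge e_p) \cdot \star_{E+p}(\varphi + x_p) \neq 0$---and you handle these correctly.
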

\begin{proof}
	Let $\star$ denote the Hodge star on $S^{E +p}$.
	By Lemma \ref{lemma: wedge-is-monotonic},
	the wedge and its dual operation are both monotonic,
	so $L_{w \wedge e_p} \subseteq L_{z \wedge e_p}$
	and 
	$$L_{(w \wedge e_p) \cdot \star(\varphi + x_p)} \subseteq L_{(z \wedge e_p) \cdot \star(\varphi + x_p)},$$ 
	as desired.
\end{proof}

\subsection{Tropical modification and linearity}

The description of Lemma \ref{lemma: linear-extension-set} allows another tropical-geometric description of a linear extension as a \textit{tropical modification}.
Here we use tropical modifications in $\T^E$ in the sense of \cite{shaw13}.
The following Theorem is self-contained and will not be required for following sections in the paper.
\begin{theorem} \label{theorem: tropical-modification}
	Let $L \subseteq \T^E$ be a tropical linear space,
	and let $\varphi \in (\T^E)^\dual$ be a dual vector.
	Then the intersection of the linear extension $L +_\varphi p$ with the tropical torus $(\T^{E+p})^\times$ 
	is equal as a set to the tropical modification of $L \cap (\T^E)^\times$ along $\varphi$, which is equal to $\mathrm{div}_{L}(\varphi)$.
\end{theorem}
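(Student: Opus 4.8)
The plan is to establish the set-theoretic equality in two stages, matching the two descriptions of the linear extension from Lemma~\ref{lemma: linear-extension-set} against the standard polyhedral definition of tropical modification. First I would recall what tropical modification along $\varphi$ means in the sense of \cite{shaw13}: the divisor $\mathrm{div}_L(\varphi)$ is the tropical cycle obtained by attaching, along the locus where $\varphi$ restricted to $L \cap (\T^E)^\times$ fails to be linear (its corner locus), an unbounded facet in the new coordinate direction $e_p$; away from that corner locus the modification is simply the graph of $\varphi$. The key translation is that, over $\T$, the dual operation $w \cdot \star\varphi$ computes the stable intersection $L_w \cap_{st} L_{\star\varphi}$, i.e.\ the stable intersection of $L$ with the tropical hyperplane defined by $\varphi$, and this stable intersection is precisely the polyhedral support of the corner locus $\mathrm{div}_L(\varphi)$. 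This is the conceptual heart of the argument and where I expect the main obstacle to lie.

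With that dictionary in hand, I would split into the two cases of Lemma~\ref{lemma: linear-extension-set}. In the generic case $w \cdot \star\varphi \neq 0$, the lemma describes $L +_\varphi p$ as the union of the graph $\{v + \varphi(v)e_p \mid v \in L_w\}$ together with the ``downward rays'' $\{v + a e_p \mid v \in L_{w \cdot \star\varphi},\ a \leq \varphi(v)\}$. Intersecting with the torus $(\T^{E+p})^\times$, the first piece is exactly the graph over $L \cap (\T^E)^\times$, and the second piece is exactly the family of new unbounded facets in the $e_p$-direction attached over the stable intersection $L_{w\cdot\star\varphi}\cap(\T^E)^\times$. I would then verify that these two pieces glue along the corner locus in the way the definition of $\mathrm{div}_L(\varphi)$ demands, so that the set-theoretic union coincides with the support of the modification. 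The degenerate case $w \cdot \star\varphi = 0$ corresponds to $\varphi$ being (tropically) linear on $L$ so that no corner locus appears; here the lemma gives only the graph, and the modification is a trivial one with empty divisor, so the two sides agree immediately.

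The main obstacle is the precise identification of $L_{w \cdot \star\varphi} \cap (\T^E)^\times$ with the corner locus of $\varphi|_{L\cap(\T^E)^\times}$, and the matching of multiplicities/weights implicit in $\mathrm{div}_L(\varphi)$ against the rays produced by the second piece of Lemma~\ref{lemma: linear-extension-set}. Because the theorem asks only for equality \emph{as sets}, I expect to be able to sidestep the subtler weight bookkeeping of the tropical cycle $\mathrm{div}_L(\varphi)$ and argue purely at the level of supports; the stable intersection interpretation of $\cdot$ over $\T$ (from \cite{speyer08}, as recalled in the preliminaries) does the heavy lifting of showing that the underlying point set of $L_{w\cdot\star\varphi}$ is exactly where the graph must be modified. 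I would close by noting that restricting everything to the torus removes the boundary strata where the semifield element $-\infty$ appears, which is what makes the clean polyhedral picture of \cite{shaw13} directly applicable.
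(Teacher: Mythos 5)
Your proposal is correct and follows essentially the same route as the paper: decompose $L +_\varphi p$ via Lemma~\ref{lemma: linear-extension-set} into the graph and the undergraph piece, then identify $L_{w \cdot \star\varphi}$ in the torus with the geometric stable intersection of $L$ and the hyperplane of $\varphi$ (Speyer) and hence with $\mathrm{div}_L(\varphi)$ (Shaw's Proposition 2.12). The only point you gloss over is that Speyer's stable-intersection theorem is stated for Pl\"ucker vectors with all coordinates nonzero, so one must note, as the paper does, that his proof extends to arbitrary tropical Pl\"ucker vectors whenever the product $w \cdot \star\varphi$ is nonzero.
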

\begin{proof}
	In \cite[Theorem 4.11]{speyer08}, Speyer shows that if $w, w'$ are tropical Pl\"ucker vectors with all nonzero coordinates,
	then $L_{w\cdot w'} \cap (\T^\times)^E$ is the geometric stable intersection
	\[\lim\limits_{\tau \to (1,\ldots,1)} (L_w \cap \tau L_{w'})\cap (\T^\times)^E,\]
	where the limit is over an open subset of the torus $(\T^\times)^E = \R^E$.
	Speyer's proof also goes through when $w,w'$ are arbitrary tropical Pl\"ucker vectors, provided that $w \cdot w' \neq 0$.
	The tropical modification of $L$ along $\varphi$ in the torus
	is exactly the set-theoretic graph of $\varphi$ on $L$
	along with the undergraph on $\mathrm{div}_L(\varphi)$.
	By \cite[Proposition 2.12]{shaw13},
	$\mathrm{div}_L(\varphi)$ is exactly the geometric stable intersection of the hyperplane defined by $\varphi$
	and $L$ in the torus.
	This shows $(\linext{L}{\varphi}{p}) \cap (\T^\times)^E$ is the tropical modification of $L\cap (\T^\times)^E$ by $\varphi$.
%
\end{proof}

In \cite[Proposition 2.25]{shaw13}, Shaw proves that every rank-preserving nontrivial extension of a matroid
corresponds to a tropical modification of the corresponding tropical linear spaces.
However, tropical modifications by linear functions are principal extensions by Lemma \ref{lemma: linear-extension-principal}, and not every elementary extension of matroids is principal.
Hence, there are some elementary tropical modifications $L' \to L$
where $L$ and $L'$ are both tropical linear spaces,
but the rational function associated to the modification $L' \to L$ is \emph{not} linear.
\begin{example}
	Let $w = e_{123} + e_{124} + e_{134} + e_{234} \in \bigwedge^3 \B^4$ be the tropical Pl\"ucker vector corresponding to the uniform matroid $U_{3,4}$.
	Let $z = (e_1 + e_2)\wedge(e_3 + e_4)$; 
	then $L_z = \langle e_1 + e_2 \rangle \oplus \langle e_3 + e_4\rangle \subseteq L_w$. 	I claim that
	\begin{equation*}
		\mathrm{div}_{L_w}\left( \frac{(x_1 + x_2)(x_3 + x_4)}{x_1 + x_2 + x_3 + x_4}\right) = L_z.
	\end{equation*}
	At the level of algebraic cycles, we have
	\begin{align*}
		\mathrm{div}_{L_w}\left( \frac{(x_1 + x_2)(x_3 + x_4)}{x_1 +x_2 + x_3 + x_4}\right) 
		&= \mathrm{div}_{L_w}(x_1 + x_2)  + \mathrm{div}_{L_w}(x_3 + x_4) \\
		& \qquad- \mathrm{div}_{L_w}(x_1 + x_2 + x_3 + x_4).
	\end{align*}
	All of these divisors,
	of linear spaces by linear functions, 
	may be computed according to Theorem \ref{theorem: tropical-modification}. Their tropical Pl\"ucker vectors are
	\begin{align*}
		w_1 = w \cdot \star(x_1 + x_2) &= e_{13} + e_{14} + e_{23} + e_{24} + e_{34}, \\
		w_2 = w \cdot \star(x_3 + x_4) &= e_{12} + e_{13} +e_{14} + e_{23} +e_{24}, \\
		w_3 = w \cdot \star(x_1 + x_2 + x_3+x_4) &= e_{12} + e_{13} + e_{14} + e_{23} + e_{24} + e_{34},
	\end{align*}
	respectively.
	By computing the valuated cocircuits via Lemma \ref{lemma: generated-by-cocircuits}, and then taking the tropical span,
	it may be checked that the faces of $L_{w_i}$ (as a polyhedral complex)
	are given by the following:
	\begin{align*}
		L_{w_1} &=  \{(\alpha, \beta, \alpha, \alpha) \mid \alpha \geq \beta \}  \cup \{(\beta, \alpha,\alpha, \alpha) \mid \alpha\geq\beta\} \\
		& \qquad \cup \{(\alpha, \alpha, \beta, \beta) \mid \alpha \geq \beta \}; \\
		L_{w_2} &=\{(\alpha,\alpha,\beta,\alpha) \mid \alpha \geq \beta \} 
		\cup \{(\alpha,\alpha,\alpha,\beta) \mid \alpha \geq \beta\}\\ 
		& \qquad \cup \{(\beta, \beta, \alpha, \alpha) \mid \alpha \geq \beta\};\\   
		L_{w_3} &= \{(\beta,\alpha,\alpha,\alpha) \mid \alpha \geq \beta\} \cup \{(\alpha,\beta,\alpha,\alpha) \mid \alpha \geq \beta\} \\
		& \qquad \cup \{(\alpha,\alpha,\beta,\alpha) \mid \alpha \geq \beta\} \cup \{(\alpha,\alpha,\alpha,\beta) \mid \alpha \geq \beta\}. 
	\end{align*}
	Hence at the level of tropical cycles, we have
	\begin{equation*}
		L_{w_1} + L_{w_2} - L_{w_3} = \{(\alpha,\alpha,\beta,\beta)\mid \alpha,\beta \in \T\} = \langle e_1 + e_2\rangle \oplus \langle e_3 + e_4\rangle.
	\end{equation*}
	Thus, $L_z$ is a linear divisor of $L_w$, giving rise to a tropical modification $L_{w + z\wedge e_p} \to L_w$, associated to a non-linear rational function.
	In terms of the underlying matroids,
	the extension $L_{w + z \wedge e_p}$ of $L_w$ corresponds to an elementary extension of matroids 
	with the non-principal modular cut
	$\mathcal M = \{ \{1,2\},\{3,4\},\{1,2,3,4\}\}$.
\end{example}

\section{Tropical images}
\subsection{Tropical graphs and tropical images}

Extending our work in \S3, we now describe how to extend the graph, and thereby the image, of a linear function to a tropical linear space.

\begin{definition}
	Let $S$ be a totally ordered idempotent semifield, and $w \in \bigwedge^d S^E$ be a tropical Pl\"ucker vector.
	Let $A = \{a_{ij}\} \in S^{F \times E}$ be a matrix with columns indexed by $E$ and rows indexed by $F = \{f_1,\ldots, f_m\}$.
	Let $\{x_1,\ldots, x_n\}$ denote the dual basis to $S^E$ and $\{y_1,\ldots, y_m\}$ denote the dual basis to $S^F$.
	Let $\rho_j = \sum_{i \in E} a_{j i}x_i \in (S^E)^\dual$ be the form associated with the $j$th row of $A$.
	The \emph{tropical graph} of $A$ on $L_w$ is the tropical linear space in $S^{E \sqcup F}$ with tropical Pl\"ucker vector
	\begin{equation*}
		g(w,A) = (w \wedge f_F) \cdot \star(\rho_1 + y_1) \cdot \star(\rho_2 + y_2) \cdot \ldots \cdot \star(\rho_m + y_m) \in \bigwedge\nolimits^d S^{E \sqcup F},
	\end{equation*}
	where $f_F = f_1\wedge f_2 \wedge \cdots \wedge f_m$.
\end{definition}

From the definition, it follows that the tropical graph of $A$ on $L_w$ is the iterated linear extension
\begin{equation*}
	(\cdots ( \linext{(\linext{L_w}{\rho_1}{f_1})}{\rho_2}{f_2})  \cdots +_{\rho_m} f_m).
\end{equation*}
This tropical linear space does not depend on the order of the extensions, as the wedge product and its dual operation are commutative.

\begin{definition}
	Let $S$ be a totally ordered idempotent semifield, $A \in S^{F \times E}$ be a matrix, and $w \in \bigwedge^d S^E$ be 
	a tropical Pl\"ucker vector.
	The \emph{tropical image} of $L_w$ under $A$, denoted $\tropim{A}{L_w}$, is the projection 
	\begin{equation*}
		\pi_F\left(L_{g(w,a)}\right)
	\end{equation*}
	of the tropical graph of $A$ on $L_w$ onto the codomain of $A$.
\end{definition}
\begin{lemma} \label{lemma: image-basics}
	Let $S$ be a totally ordered idempotent semifield, $L_w \subseteq S^E$ a tropical linear space, and $A \in S^{F \times E}$ a matrix.
	Then
	\begin{enumerate}[(a)]
		\item{ $AL_w \subseteq \tropim{A}{L_w}$;}
		\item{ $\rank \tropim{A}{L_w} \leq \rank L_w$;}
		\item{ if $L_w \subseteq L_z \subseteq S^E$ is another tropical linear space,
			then
			\begin{equation*}
				\tropim{A}{L_w} \subseteq \tropim{A}{L_z}.
			\end{equation*}}
	\end{enumerate}
\end{lemma}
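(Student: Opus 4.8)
The plan is to read off all three statements from the two operations out of which $\tropim{A}{L_w}$ is assembled: the iterated linear extension that produces the tropical graph $L_{g(w,A)}$, and the coordinate projection $\pi_F$. Write the graph as the end $L^{(m)}$ of the chain $L^{(0)} = L_w$ and $L^{(j)} = \linext{L^{(j-1)}}{\rho_j}{f_j}$, where at each step I view $\rho_j = \sum_{i \in E} a_{ji} x_i$ as a functional on the enlarged module $S^{E \sqcup \{f_1,\dots,f_{j-1}\}}$ that vanishes on the coordinates adjoined so far. Each step is legitimate and rank-preserving: $w \wedge f_F \neq 0$ because the $f_j$ are fresh, so $g(w,A) \neq 0$ is a rank-$d$ tropical Pl\"ucker vector by Lemma~\ref{lemma: plucker-ops}, with $d = \rank L_w$.

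For (a) I carry the set-theoretic graph through the chain. By Lemma~\ref{lemma: linear-extension-set}, each step $L^{(j-1)} \mapsto L^{(j)}$ contains $\{u + \rho_j(u) f_j \mid u \in L^{(j-1)}\}$, regardless of whether the relevant dot product vanishes. Feeding in an element $u = v + \sum_{k < j} \rho_k(v) f_k$ of the graph built so far, its $E$-coordinates are still those of $v$, so $\rho_j(u) = \rho_j(v)$ and the new element is $v + \sum_{k \le j} \rho_k(v) f_k$. By induction $L_{g(w,A)}$ contains the full graph $\Gamma = \{v + \sum_j \rho_j(v) f_j \mid v \in L_w\}$. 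Since $\pi_F(\Gamma) = \{\sum_j \rho_j(v) f_j \mid v \in L_w\} = AL_w$ and image preserves inclusion, $AL_w = \pi_F(\Gamma) \subseteq \pi_F(L_{g(w,A)}) = \tropim{A}{L_w}$.

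Parts (b) and (c) then follow from how the two operations treat rank and inclusion. Because $g(w,A) \in \bigwedge^d S^{E \sqcup F}$, its underlying matroid $M'$ has rank $d$; Lemma~\ref{lemma: minor-plucker-coordinates}(b) computes the rank of $\pi_F(L_{g(w,A)})$ as $d - \rank(M' \contract F) \le d = \rank L_w$, giving (b). For (c), starting from $L_w \subseteq L_z$ I iterate the monotonicity of linear extension, Lemma~\ref{lemma: linear-extension-monotonic}, along the chain to obtain $L_{g(w,A)} \subseteq L_{g(z,A)}$, and apply the (trivial) monotonicity of $\pi_F$ on subsets to conclude $\tropim{A}{L_w} \subseteq \tropim{A}{L_z}$.

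The bookkeeping is otherwise routine; the one place demanding care is the inductive identity $\rho_j(u) = \rho_j(v)$ in (a). This is exactly the point where extending each row form by zero on the previously adjoined coordinates guarantees that the iterated graph is the honest graph of the linear map $A$, rather than a distortion depending on the order or the intermediate extensions — and it is what pins $\pi_F(\Gamma)$ down to $AL_w$.
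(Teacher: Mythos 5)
Your proof is correct and follows essentially the same route as the paper's: part (a) via Lemma \ref{lemma: linear-extension-set} applied to the iterated linear extension, part (b) from the graph having the same rank as $L_w$ together with projection not increasing rank, and part (c) from Lemma \ref{lemma: linear-extension-monotonic} plus monotonicity of $\pi_F$. The only difference is that you spell out the induction and the key observation that $\rho_j$ vanishes on the previously adjoined coordinates (so $\rho_j(u)=\rho_j(v)$), a point the paper leaves implicit.
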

\begin{proof}
	(a) By Lemma \ref{lemma: linear-extension-set},
	the tropical graph of $A$ on $L_w$,
	equal to the linear extension $(\cdots ( \linext{(\linext{L_w}{\rho_1}{f_1})}{\rho_2}{f_2}) \cdots +_{\rho_m}f_m)$,
	contains $v + \sum_{i=1}^m \rho_i(v)f_i$ for all $v \in L_w$.
	Hence, the tropical image contains $\sum_{i=1}^m\rho_i(v)f_i = Av$ for all $v \in L_w$.

	(b) By construction, the tropical graph of $A$ on $L_w$ has the same rank as $L_w$,
	and projecting onto a coordinate subspace does not increase rank.

	(c) This follows from Lemma \ref{lemma: linear-extension-monotonic}
	and that the coordinate projection $\pi_F$ is monotonic.
\end{proof}

The following lemma gives a more explicit formulation of the Pl\"ucker coordinates of the tropical image.
\begin{lemma} \label{lemma: plucker-image-tropdet}
	Let $S$ be a totally ordered idempotent semifield,
	$w \in \bigwedge^d S^E$ be a tropical Pl\"ucker vector,	
	and $A \in S^{F \times E}$ be a matrix.
	The tropical Pl\"ucker coordinates $z$ of $\tropim{A}{L_w}$ are 
	\begin{equation*}
		z_J = \sum_{I \subseteq E - K} \tropdet{A_{JI}}w_{I \cup K},
	\end{equation*}
	where $K$ is any basis for the contraction of the tropical graph of $A$ on $L_w$ to $E$, 
	and $\tropdet{A_{JI}}$ is the tropical $J \times I$ minor of $A$.
\end{lemma}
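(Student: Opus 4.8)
The plan is to reduce the statement to a single coordinate of the tropical graph $g(w,A)$ and then expand that coordinate combinatorially. Since $\tropim{A}{L_w} = \pi_F(L_{g(w,A)})$ is a coordinate projection, I would first invoke Lemma~\ref{lemma: minor-plucker-coordinates}(b) with ground set $E \sqcup F$ and retained coordinates $F$, so that the forgotten coordinates are exactly $E$. Taking $K \subseteq E$ to be a basis of the contraction $M\contract F$ of the underlying matroid $M$ of the tropical graph (so that $|K| = \rank(M\contract F)$ and the resulting vector is nonzero rather than zero), that lemma gives that $z_J = g(w,A)_{K \cup J}$ is precisely the Pl\"ucker vector of the tropical image. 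It then remains to compute the single coordinate $g(w,A)_{K \cup J}$.

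Next I would rewrite $g(w,A)$ in the dual-wedge form
\[
	g(w,A) = \star\!\left(\star_E w \wedge (\rho_1 + y_1) \wedge \cdots \wedge (\rho_m + y_m)\right).
\]
This follows directly from the definition $w \cdot w' = \star(\star w \wedge \star w')$ together with the involutivity $\star\star = \mathrm{id}$: one first checks $\star_{E \sqcup F}(w \wedge f_F) = \star_E w$, since $e_{I_0} \wedge f_F \mapsto x_{(E\sqcup F) - (I_0 \cup F)} = x_{E - I_0}$, and then each successive application of $\cdot\,\star(\rho_j + y_j)$ simply appends the one-form $(\rho_j + y_j)$ inside the Hodge star. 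This is the same mechanism already recorded in the alternative expression for a single linear extension, applied $m$ times.

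With this in hand, using $(\star_E w)_{E - I_0} = w_{I_0}$ and the definition of $\star$ on $S^{E\sqcup F}$, I obtain $g(w,A)_{K\cup J} = \bigl(\star_E w \wedge \bigwedge_j (\rho_j + y_j)\bigr)_{(E-K)\cup (F-J)}$. Expanding $\bigwedge_j(\rho_j + y_j)$, each factor contributes either $y_j$ or a term $a_{ji}x_i$; extracting the $y_{F-J}$ part forces $y_j$ to be chosen for exactly the indices $j \in F-J$, leaving the row forms $\rho_j$ for $j \in J$. Because the idempotent exterior algebra carries no signs, the $x_I$-coefficient of $\bigwedge_{j \in J}\rho_j$ is exactly the tropical minor $\tropdet{A_{JI}}$ for $I \subseteq E$ with $|I| = |J|$. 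Wedging against $\star_E w = \sum_{I_0} w_{I_0} x_{E - I_0}$ is nonzero only when $E - I_0$ and $I$ are disjoint with $(E - I_0)\sqcup I = E - K$, that is, $I_0 = K \sqcup I$ and $I \subseteq E - K$. Collecting these contributions yields $z_J = \sum_{I \subseteq E-K}\tropdet{A_{JI}}\, w_{K\cup I}$, as claimed.

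The main obstacle I anticipate is the index bookkeeping in this final expansion: verifying that selecting the $y$-part $y_{F-J}$ pins the row set of the minor to be exactly $J$, that the surviving column set $I$ must lie in $E-K$ with $|I| = |J| = d - |K|$, and that the resulting scalar is precisely $\tropdet{A_{JI}}$ rather than some other weighting. I would also take care to confirm that choosing $K$ to be a \emph{basis} of $M\contract F$, rather than an arbitrary subset of the correct size, is what guarantees the projected vector is nonzero, so that Lemma~\ref{lemma: minor-plucker-coordinates}(b) delivers the Pl\"ucker vector of $\tropim{A}{L_w}$ itself and not the zero vector.
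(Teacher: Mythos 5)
Your proposal is correct and takes essentially the same route as the paper's proof: reduce via Lemma~\ref{lemma: minor-plucker-coordinates} to the single coordinate $g(w,A)_{K \cup J}$, rewrite the tropical graph in the dual form $\star\left(\star_E w \wedge \bigwedge_{j}(\rho_j + y_j)\right)$, and expand, with extraction of $y_{F-J}$ pinning the row set to $J$ and the support of $\star_E w$ pinning $I_0 = K \cup I$ with $I \subseteq E-K$. Your added justifications (the iterated identity $\star_{E\sqcup F}(w \wedge f_F) = \star_E w$ behind the duality step, and the observation that taking $K$ to be a basis of the contraction guarantees the projected vector is nonzero) only make explicit points the paper treats tersely.
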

\begin{proof}
	Let $g(w,A)$ denote the tropical Pl\"ucker vector of the tropical graph of $A$ on $L_w$.
	By Lemma \ref{lemma: minor-plucker-coordinates}, 
	if $K$ is a basis for the contraction of $g(w,A)$ to $E$,
	then $J \mapsto g(w,A)_{K \cup J}$ for $J \subseteq F$ are tropical Pl\"ucker coordinates for $\tropim{A}{L_w}$.
	By definition,
	if $\star$ is the Hodge star on $S^{E \sqcup F}$ 
	and $\rho_j$ denotes the form associated to the $j$th row of $A$,
	\begin{align*}
		g(w,A)_{K \cup J} 	&=\left[(w \wedge f_F) \cdot \star(\rho_1 + y_1) \cdot \ldots \cdot \star(\rho_m + y_m)\right]_{K \cup J} \\
				&=\left[(\star_E w) \wedge \bigwedge_{j=1}^m (\rho_j + y_j)\right]_{(E-K) \cup (F-J)} 
	\end{align*}
	by duality. Now since $\star_E w$ is supported in $E$
	and $\rho_j+y_j$ is supported in $E \cup j$ for all $j \in F$,
	the only terms that contribute to $y_{F-J}$
	in the above wedge product are $\rho_j + y_j$ for $j \in F-j$.
	Hence,
	\begin{align*}
		g(w,A)_{K\cup J} 	&= \left[(\star_E w) \wedge \bigwedge_{j \in J} (\rho_j+y_j)\right]_{E-K} \\
				&= \sum_{I\subseteq E-K} (\star_Ew)_{E-K-I} (\bigwedge_{j \in J}\rho_j)_I \\
				&= \sum_{I \subseteq E-K} w_{I\cup K} \tropdet{A_{JI}},
	\end{align*}
	as desired.
\end{proof}
This lemma shows that if $A\in S^{F \times E}$ has a nonzero maximal minor,
then $\tropim{A}{S^E}$ is equal to
the Stiefel tropical linear space associated to $A^T$ (see \cite{fink15}).
Even if $A$ has no nonzero maximal minor, we have
\begin{corollary}
	For arbitrary $A \in S^{F\times E}$,
	if $B$ is a submatrix of $A$ formed by the columns of a maximal non-zero minor of $A$,
	then 
	\[\tropim{A}{S^E} = \tropim{B}{S^E},\] 
	and hence is a Stiefel tropical linear space.
	In particular, $\tropim{A}{S^E}$ has rank equal to the size of the largest nonzero minor of $A$.
\end{corollary}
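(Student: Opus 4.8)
The plan is to pin down the tropical Plücker coordinates of $\tropim{A}{S^E}$ via Lemma~\ref{lemma: plucker-image-tropdet} and recognize them as those of a Stiefel tropical linear space. Write $n=|E|$, let $r$ be the size of the largest nonzero minor of $A$, and let $C\subseteq E$, $|C|=r$, be its columns, so that $B=A_{FC}$ and $\tropdet{A_{RC}}\neq 0$ for some $R\in\binom{F}{r}$; here $\tropim{B}{S^E}$ is shorthand for $\tropim{B}{S^C}$. Since $S^E=L_w$ for $w=e_E\in\bigwedge^n S^E$, we have $w_{I\cup K}\neq 0$ only when $I\cup K=E$, so when $K$ is a basis for the contraction $M_g\contract F$ (with $M_g$ the underlying matroid of the tropical graph $g(w,A)$) the sum in Lemma~\ref{lemma: plucker-image-tropdet} collapses to its single term $I=E-K$:
\[
z_J=\tropdet{A_{J,\,E-K}},\qquad J\in\binom{F}{\,|E-K|\,}.
\]
First I would isolate the combinatorial heart as the claim that $E-C$ is itself a basis for $M_g\contract F$.

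Granting the claim, I would take $K=E-C$, so that $z_J=\tropdet{A_{JC}}$ for $J\in\binom{F}{r}$. Applying Lemma~\ref{lemma: plucker-image-tropdet} to $B$ itself — whose tropical graph contracts to $C$ with empty basis, since $B$ has full column rank $r=|C|$ — shows $\tropim{B}{S^C}$ has the same coordinates $\tropdet{B_{JC}}=\tropdet{A_{JC}}$. These are exactly the maximal minors of $B^{T}$, so both spaces equal the Stiefel tropical linear space of $B^{T}$; this uses the observation following Lemma~\ref{lemma: plucker-image-tropdet}, which applies because $r\le\min(|F|,|E|)$ and $B$ has the nonzero maximal minor $\tropdet{A_{RC}}$. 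This would give $\tropim{A}{S^E}=\tropim{B}{S^C}$ at once, together with $\rank\tropim{A}{S^E}=r$, the final assertion.

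To prove the claim I would use that $\tropdet{A_{JI}}\neq 0$ if and only if the bipartite graph of the nonzero entries of $A$ has a perfect matching between $J$ and $I$; thus $r$ is the term rank of $A$, and the term rank of $B$ is exactly $r$ (at most $|C|=r$, and at least $r$ by $\tropdet{A_{RC}}$). By Lemma~\ref{lemma: linear-extension-principal}, $M_g$ is the iterated principal extension of the free matroid $U_{n,n}$ on $E$ by the supports of the rows $\rho_j$ of $A$, and I would use that this endows $M_g$ with the matching rank function
\[
\rank_{M_g}(X_E\sqcup X_F)=|X_E|+\nu(X_F,\,E-X_E),\qquad X_E\subseteq E,\ X_F\subseteq F,
\]
where $\nu$ denotes maximum matching size. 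Granting this, $\rank_{M_g}(F)=\nu(F,E)=r$, hence $\rank(M_g\contract F)=n-r=|E-C|$; and $\rank_{M_g}((E-C)\cup F)=|E-C|+\nu(F,C)=(n-r)+r=n$, since $\nu(F,C)$ is the term rank of $B$. Therefore $\rank_{M_g\contract F}(E-C)=\rank_{M_g}((E-C)\cup F)-\rank_{M_g}(F)=n-r$ equals both $|E-C|$ and $\rank(M_g\contract F)$, so $E-C$ is a basis of $M_g\contract F$.

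The main obstacle is the rank formula for $M_g$. It is transparent in the realizable picture, where $M_g$ is realized by $\{e_i\}_{i\in E}\cup\{\rho_j\}_{j\in F}$ and quotienting by $\langle e_i:i\in X_E\rangle$ reduces the contribution of $X_F$ to the rank of the submatrix $A_{X_F,\,E-X_E}$ — a term rank, since $M_g$ depends only on the support pattern of $A$ by Lemma~\ref{lemma: linear-extension-principal}. Justifying it purely matroidally from the principal-extension independent sets requires care: it is essentially the assertion that the matroid induced from $U_{n,n}$ through the bipartite graph of $A$ has matching rank, which also underlies the matroid bullet of the main theorem. I would either prove this by induction on the number of extensions or cite the matroid-induction literature; in any case only the two values $\rank_{M_g}(F)=r$ and $\rank_{M_g}((E-C)\cup F)=n$ are actually needed.
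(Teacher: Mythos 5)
The paper states this corollary with no written proof at all, presenting it as immediate from Lemma \ref{lemma: plucker-image-tropdet}; your proposal supplies the missing argument, and it is correct, but it takes a heavier route than necessary at exactly one point. Your reduction (for $w = e_E$ the sum in Lemma \ref{lemma: plucker-image-tropdet} collapses to $z_J = \tropdet{A_{J,E-K}}$), your key claim that $E-C$ is a basis of the contraction $M_g \contract F$ of the graph matroid, your application of the lemma to $B$ with empty contraction basis, the reading of $\tropim{B}{S^E}$ as $\tropim{B}{S^C}$, and the identification of both coordinate vectors with $J \mapsto \tropdet{A_{JC}}$ are all right, as is the matching rank formula for $M_g$ that you invoke. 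The detour is in how you justify that formula: you route it through iterated principal extensions (Lemma \ref{lemma: linear-extension-principal}) and then concede that making this rigorous needs an induction or a citation to the matroid-induction literature. That concession is avoidable, because the duality computation inside the paper's own proof of Lemma \ref{lemma: plucker-image-tropdet} is valid for \emph{every} coefficient of $g(w,A)$, not merely for $K$ a basis of the contraction: for all $K \subseteq E$ and $J \subseteq F$ with $|K|+|J| = \rank L_w$,
\begin{equation*}
	g(w,A)_{K \cup J} \;=\; \sum_{I \subseteq E-K} w_{I \cup K}\,\tropdet{A_{JI}},
\end{equation*}
which for $w = e_E$ (so $\rank L_w = |E|$) says that the bases of $M_g$ are exactly the sets $K \cup J$ such that $J$ admits a perfect matching onto $E-K$ in the support graph of $A$. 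Both facts you need then drop out with no further machinery: a subset of $F$ is independent in $M_g$ if and only if it can be matched into $E$ (a matching of $X_F$ with image $I$ extends to the basis $(E-I) \cup X_F$), so $\rank_{M_g}(F) = r$ and the row set $R$ of your nonzero minor is a basis of $M_g|_F$; and $(E-C)\cup R$ is a basis of $M_g$ because $\tropdet{A_{RC}} \neq 0$ provides the matching, so $E-C$ is a basis of $M_g \contract F$ by the standard fact that $X$ is a basis of $M \contract F$ exactly when $X \cup B_F$ is a basis of $M$ for some basis $B_F$ of $M|_F$. With that substitution your argument is complete and self-contained, and it also recovers the full matching rank formula you stated, rather than only the two values you needed.
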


\subsection{The underlying matroid of the tropical image}

In this section, we compute the underlying matroid of the tropical image,
which turns out to be a classical matroid-theoretic construction.
Given a matroid $M$ on $E$ and a bipartite graph $\Gamma$ on $E \sqcup F$,
the \textit{induced matroid} $\Gamma(M)$ is the matroid on $F$
whose independent sets are exactly the $J\subseteq F$ that have a perfect matching in $\Gamma$ to an independent set of $M$
(see \cite[Chapter 12.2]{oxley92}).
\begin{theorem} \label{theorem: image-underlying-matroid}
	Suppose $M$ is a matroid on $E$ corresponding to the tropical linear space $L_w \subseteq \B^E$.
	Let $\Gamma$ be a bipartite graph on $E \sqcup F$, with incidence matrix $A \in \B^{F \times E}$.
	Then the induced matroid $\Gamma(M)$ corresponds to the tropical linear space $\tropim{A}{L_w} \subseteq \B^F$.
\end{theorem}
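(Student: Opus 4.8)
The plan is to reduce the statement to a purely matroid-theoretic identity about the underlying matroid of the tropical graph, and then prove that identity by a strengthened induction. First I would record what projection does over $\B$: by the remark following Lemma~\ref{lemma: minor-plucker-coordinates}, the coordinate projection $\pi_F$ corresponds to deletion, so the underlying matroid of $\tropim{A}{L_w}=\pi_F(L_{g(w,A)})$ is $M_g\delete E$, where $M_g$ is the underlying matroid of the tropical graph. Next, since $g(w,A)$ is the iterated linear extension $(\cdots(\linext{L_w}{\rho_1}{f_1})\cdots+_{\rho_m}f_m)$ and the row form $\rho_j=\sum_i a_{ji}x_i$ has support $\{i\in E: a_{ji}=1\}=N_\Gamma(f_j)$, iterating Lemma~\ref{lemma: linear-extension-principal} identifies $M_g$ with the iterated principal extension $M_m$, where $M_0=M$ and $M_j=\linext{M_{j-1}}{N_\Gamma(f_j)}{f_j}$. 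The theorem thus reduces to the identity $M_m\delete E=\Gamma(M)$, i.e.\ to showing that $J\subseteq F$ is independent in $M_m$ if and only if $J$ is matched in $\Gamma$ to an independent set of $M$.

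I would prove this by induction on $m$, but strengthening the statement so that the induction closes despite each principal extension being taken relative to a closure in the \emph{growing} matroid $M_{j-1}$, whose flats may absorb previously added elements $f_i$. The strengthened claim is: for every $S\subseteq E$ and every $J\subseteq\{f_1,\ldots,f_k\}$, the set $S\cup J$ is independent in $M_k$ if and only if there is an injection $\mu:J\to E\setminus S$ with $\mu(f)\in N_\Gamma(f)$ for all $f\in J$ and with $S\cup\mu(J)$ independent in $M$. The base case $k=0$ is immediate, and the case $f_k\notin J$ follows because a principal extension does not alter the matroid on the old ground set.

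The substantive case is $f_k\in J$, say $J=J'+f_k$. Here I would invoke the description of the independent sets of a principal extension recalled just before Lemma~\ref{lemma: linear-extension-principal}: $(S\cup J')+f_k$ is independent in $M_k=\linext{M_{k-1}}{N_\Gamma(f_k)}{f_k}$ exactly when $\mathrm{cl}_{M_{k-1}}(S\cup J')\not\supseteq\mathrm{cl}_{M_{k-1}}(N_\Gamma(f_k))$, which unwinds to the existence of some $e\in N_\Gamma(f_k)$ with $(S\cup J')+e$ independent in $M_{k-1}$; note such an $e$ necessarily lies in $E\setminus S$. Applying the inductive hypothesis to $(S+e)\cup J'$ yields a matching $\mu'$ of $J'$, which I extend by setting $\mu(f_k)=e$ to obtain the required matching of $J$; the construction is manifestly reversible, closing the induction. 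Taking $S=\emptyset$ and $k=m$ then gives precisely the independent sets of $\Gamma(M)$.

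The main obstacle is exactly this bookkeeping. A naive induction over subsets of $F$ alone fails, because the flat $\mathrm{cl}_{M_{j-1}}(N_\Gamma(f_j))$ used to adjoin $f_j$ can contain earlier $f_i$; the fix is to carry an auxiliary ground-set part $S$ and, at each stage, push the chosen neighbor $e$ of $f_k$ into $S$, converting a statement about the partially-built matroid into one about $M$ itself. As an independent consistency check, one can instead read the same characterization off the formula $z_J=\sum_{I\subseteq E-K}\tropdet{A_{JI}}\,w_{I\cup K}$ of Lemma~\ref{lemma: plucker-image-tropdet}: over $\B$ a summand equals $1$ precisely when $\Gamma$ matches $J$ to $I$ and $I\cup K$ is a basis of $M$, so $z_J\neq 0$ iff $J$ extends such a matching, in agreement with $\Gamma(M)$.
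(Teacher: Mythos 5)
Your proof is correct, and it takes a genuinely different route from the paper's. The paper's own proof is a short computation from Lemma \ref{lemma: plucker-image-tropdet}: over $\B$, a tropical determinant $\tropdet{A_{JI}}$ equals $1$ exactly when $\Gamma$ has a matching from $J$ to $I$, so the coordinates $z_J=\sum_{I\subseteq E-K}\tropdet{A_{JI}}w_{I\cup K}$ are nonzero precisely for those $J$ matched to some $I$ with $I\cup K$ a basis of $M$; the paper then concludes via the minimality of $K$ (Lemma \ref{lemma: minor-plucker-coordinates}) that these $J$ are the bases of $\Gamma(M)$. In other words, what you relegate to an ``independent consistency check'' in your last paragraph is essentially the paper's entire argument. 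Your main argument is instead structural: identify the underlying matroid $M_g$ of the tropical graph as the iterated principal extension $M_m$ via Lemma \ref{lemma: linear-extension-principal}, use that projection corresponds to deletion over $\B$, and then prove the purely matroid-theoretic identity $M_m\delete E=\Gamma(M)$ by induction on $m$. Your strengthened inductive statement, carrying the auxiliary set $S\subseteq E$ and pushing the chosen neighbor of $f_k$ into $S$, correctly neutralizes the genuine subtlety that $\mathrm{cl}_{M_{k-1}}(N_\Gamma(f_k))$ is taken in the growing matroid and may contain earlier $f_i$; both directions of the exchange between independence in $M_k$ and matchings into $M$ go through as you describe. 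As for what each approach buys: the paper's computation is shorter and simultaneously identifies the Pl\"ucker coordinates of the image (not merely its matroid), but it compresses into a single ``hence'' the verification that the sets matched through complements of the fixed basis $K$ are exactly the bases of $\Gamma(M)$; your induction is longer but self-contained at the matroid level, makes that step fully explicit, and in effect reproves the classical fact that induced (transversal-type) matroids arise from iterated principal extensions followed by deletion, which connects the theorem more directly to the matroid literature the paper cites.
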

\begin{proof}
	Non-zero terms in the expansion of $\tropdet{A_{JI}}$ correspond to bijections $f: J \to I$
	such that $A_{j,f(j)} = 1$ for all $j \in J$, i.e. $j$ and $f(j)$ are adjacent in $\Gamma$ for all $j \in J$.
	Hence, $\tropdet{A_{JI}} = 1$ if and only if there is a matching from $J$ to $I$ in $\Gamma$, and is $0$ otherwise.
	By Lemma \ref{lemma: plucker-image-tropdet}, the tropical Pl\"ucker coordinates of $\tropim{A}{L_w}$ are 
	\begin{equation}\label{equation: underlying-coordinates}
		z_J = \sum_{I \subseteq E-K} \tropdet{A_{JI}}w_{K\cup I},
	\end{equation}
	where $K$ is a basis for the contraction of the tropical graph to $E$.
	By Lemma \ref{lemma: minor-plucker-coordinates}, 
	$K$ is a minimal subset of $E$
	such that \eqref{equation: underlying-coordinates}
	does not vanish for all $J$, and hence the bases of $\tropim{A}{L_w}$ are the bases of $\Gamma(M)$.
\end{proof}
\begin{corollary} \label{corollary: transversal-matroids}
	The tropical linear spaces assocaited to transversal matroids are exactly the tropical images of free $\B$-modules.
\end{corollary}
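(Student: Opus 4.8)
The plan is to specialize Theorem~\ref{theorem: image-underlying-matroid} to the case where the source matroid is free, and then recognize the induced matroids that arise as exactly the transversal matroids. First I would identify the free $\B$-module $\B^E$ as a tropical linear space: it is $L_w$ for the unique rank-$|E|$ tropical Pl\"ucker vector $w \in \bigwedge^{|E|}\B^E$ (with $w_E = 1$), and its underlying matroid $U$ is the free matroid on $E$, in which every subset of $E$ is independent. This identification is the essential input, since it lets Theorem~\ref{theorem: image-underlying-matroid} apply with $M = U$.

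For the forward inclusion, I would take an arbitrary matrix $A \in \B^{F \times E}$ and let $\Gamma$ be the bipartite graph on $E \sqcup F$ with incidence matrix $A$. By Theorem~\ref{theorem: image-underlying-matroid}, the tropical linear space $\tropim{A}{\B^E} \subseteq \B^F$ corresponds to the induced matroid $\Gamma(U)$. Because every subset of $E$ is independent in $U$, the independent sets of $\Gamma(U)$ are precisely the $J \subseteq F$ admitting a perfect matching in $\Gamma$ to some subset of $E$; that is, $\Gamma(U)$ is exactly the transversal matroid presented by the neighborhood family $(\{j \in F : ij \in \Gamma\})_{i \in E}$ (see \cite[Chapter 12.2]{oxley92}). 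Hence every tropical image of a free $\B$-module is the tropical linear space associated to a transversal matroid. Conversely, every transversal matroid $N$ on $F$ has, by definition, a presentation as the partial transversals of a finite set system; choosing a finite index set $E$ for this presentation yields a bipartite graph $\Gamma$ on $E \sqcup F$ with $N = \Gamma(U)$, and Theorem~\ref{theorem: image-underlying-matroid} then realizes the tropical linear space of $N$ as $\tropim{A}{\B^E}$ for the incidence matrix $A$ of $\Gamma$. Since tropical linear spaces over $\B$ are in bijection with matroids on the same ground set, these two inclusions combine into the asserted equality of the two classes of tropical linear spaces.

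The argument is essentially a dictionary translation rather than a computation, so I do not anticipate a genuine obstacle. The only point requiring care is the matching of conventions: verifying that the free $\B$-module corresponds to the \emph{free} matroid (and not to a uniform matroid of smaller rank), and that the induced matroid $\Gamma(U)$ of a free matroid is literally the transversal matroid of the neighborhood family rather than some dual or variant construction. Once these identifications are pinned down, the corollary follows immediately from Theorem~\ref{theorem: image-underlying-matroid}.
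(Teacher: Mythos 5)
Your proposal is correct and follows exactly the route of the paper, which proves this corollary in one line by citing the fact that transversal matroids are precisely the induced matroids of free matroids and invoking Theorem~\ref{theorem: image-underlying-matroid}; your write-up simply makes both directions of that dictionary explicit. The identifications you flag as requiring care (the free $\B$-module corresponding to the free matroid, and $\Gamma(U)$ being the transversal matroid of the neighborhood family) are indeed the right points to check, and they hold as you state.
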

\begin{proof}
	Transversal matroids are the induced matroids of free matroids.
\end{proof}
Corollary \ref{corollary: transversal-matroids} also follows from the description of Stiefel tropical linear spaces as the tropical image of free modules.
\begin{remark}
On \cite[p. 108]{frenk13}, Frenk investigates the \emph{valuated linking system}
associated to a bipartite graph with edges weighted by elements of a semifield.
If $A$ is the weighted incidence matrix of the weighted bipartite graph,
Frenk's definition assigns to the pair $(I,J)$ the weight $\tropdet{A_{JI}}$.
Thus, the tropical image under $A$ is, in Frenk's language, 
the image under the valuated linking system associated to the weighted graph of $A$.
\end{remark}
The tropical image is not functorial with respect to matrix multiplication.
It may be the case that $\tropim{B}{\tropim{A}{L_w}}$ is not equal to $\tropim{C}{L_w}$ for any matrix $C$!
\begin{example}\label{example: no-composition}
	Consider the $\B$-matrices 
	\begin{align*}
		B = \begin{bmatrix}
			1&0&0\\1&0&0\\
			0&1&0\\0&1&0\\
			0&0&1\\0&0&1 
		\end{bmatrix}	&&
		A = \begin{bmatrix}
			1&1\\1&1\\1&1\end{bmatrix}
	\end{align*}
	The tropical image $\tropim{B}{\tropim{A}{\B^3}}$ is exactly the rank 2 truncation of the transversal matroid $\tropim{B}{\B^3}$.
	This truncation has three cyclic flats of rank 1, namely $\{1,2\}$, $\{3,4\}$, and $\{5,6\}$. 
	By a result of Brylawski, a rank $r$ transversal matroid has at most $\binom{r}{k}$ rank-$k$ cyclic flats \cite{brylawski75}.
	Thus, $\tropim{B}{\tropim{A}{\B^3}}$ is not transversal, and hence is not of the form $\tropim{C}{\B^3}$ for any matrix $C$.
	\begin{figure}[ht]
	\begin{tikzpicture}
		\pgfmathsetmacro{\x}{2*asin(1/20)}
		
		\draw (0:1) -- (120:1) -- (240:1) -- cycle;
		\fill 
		({-\x}:1) 	circle(0.1) 
		({\x}:1) 	circle(0.1)
		({120-\x}:1) 	circle(0.1)
		({120+\x}:1) 	circle(0.1) 
		({240-\x}:1) 	circle(0.1)
		({240+\x}:1) 	circle(0.1);
		\node () at 	(-\x:1.3) 	{1};
		\node () at 	(\x:1.3) 	{2};
		\node () at 	({120-\x}:1.3) 	{3};
		\node () at 	({120+\x}:1.3) 	{4};
		\node () at 	({240-\x}:1.3) 	{5};
		\node () at 	({240+\x}:1.3) 	{6};
		\draw (4,0) -- (6,0);
		\foreach \i in {4,5,6}
		{
		\foreach \j in {-0.1,0.1}
			\fill (\i, \j) circle(0.1);
		}
		\node ( ) at (4,0.4) 	{1};
		\node ( ) at (4,-0.4) 	{2};
		\node ( ) at (5,0.4) 	{3};
		\node ( ) at (5,-0.4) 	{4};
		\node ( ) at (6,0.4) 	{5};
		\node ( ) at (6,-0.4) 	{6};
	\end{tikzpicture}
	\caption{Geometric representations of the matroids $\tropim{B}{\B^3}$ and $\tropim{B}{\tropim{A}{\B^3}}$ of Example \ref{example: no-composition}.}
	\end{figure}
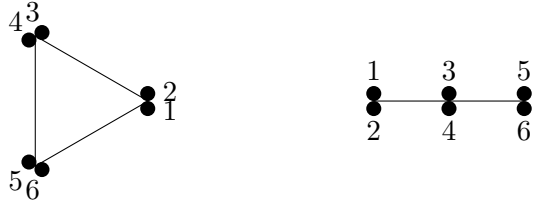
\end{example}

\subsection{Realizability}
Recall that if $k$ is a field, $S$ a totally ordered idempotent semifield, and $\val: k \to S$ is a surjective valuation,
then we may \textit{tropicalize} a linear subspace $\Lambda \subseteq k^n$ by taking the valuation of every element of $\Lambda$ coordinate-wise. 
The result is known as the \textit{tropicalization} of $\Lambda$ and will be denoted $\trop{\Lambda}$.
The tropical Pl\"ucker coordinates of $\trop{\Lambda}$ are the image of the Pl\"ucker coordinates of $\Lambda$ under $\val$ \cite{speyer04}.
A tropical linear space is \textit{realizable} with respect to a particular valuation if it is a tropicalization of some linear space under that valuation.
For matroids, a realizable tropical linear space via $k \to \B$ is exactly a representable matroid over $k$.
From the perspective of matroids over hyperfields, a valuation $\val: k \to S$ induces a hyperfield homomorphism from $k$ to the canonical hyperfield associated to $S$, and tropicalization is exactly the pushforward under this hyperfield homomorphism \cite{baker16}.

Let $M$ be a matroid on $E$ and $\Gamma$ a bipartite graph on $E \sqcup F$.
Piff and Welsh proved in 1970 that if $M$ is representable over a sufficiently large field, 
then $\Gamma(M)$ is representable over that field as well \cite[Proposition 12.2.16]{oxley92}.
Fink and Rinc\'on also observed that a Stiefel tropical linear space in $\T^n$ 
is the tropicalization of the image of a general lift of its matrix.
In this section, we generalize by investigating the realizability of the tropical image.

Given a field $\kappa$ and a totally ordered semifield $S$, let $\puiseux{S}$ 
denote the field of generalized power series $\sum_{s \in T} \alpha_st^s$, where $T \subseteq S^\times$ is well-ordered and $\alpha_s \in \kappa$ for all $s \in T$.
There is a surjective valuation $\val: \puiseux{S} \to S$ 
that maps each non-zero power series to the inverse (in $S$) of its degree (here the inverse is to accord with our convention of maximum for addition).
Poonen showed that every valuation that is trivial on its initial field factors through such a valuation \cite{poonen93}.

The main technical tool is Lemma \ref{lemma: lift}, which describes when solutions to certain valuated equations may be lifted.
Proving the existence of solutions requires the following lemma on the non-vanishing of polynomials.
\begin{lemma}{\cite[Lemma 2]{lindstroem73}} \label{lemma: poly-non-zero}
	Let $\kappa$ be a field, and let $f \in \kappa[x_1,\ldots, x_n]$ be a polynomial.
	Suppose the degree of $f$ in each $x_i$ alone is less than $|\kappa|$.
	Then there exists $\alpha \in \kappa^n$ such that $f(\alpha) \neq 0$.
\end{lemma}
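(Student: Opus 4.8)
The plan is to proceed by induction on the number of variables $n$, reducing the multivariate nonvanishing statement to the one-variable case at each step.

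For the base case $n = 1$, a nonzero polynomial $f \in \kappa[x_1]$ of degree $d = \deg f$ has at most $d$ roots in $\kappa$. The hypothesis gives $d < |\kappa|$, so the number of roots is strictly less than the number of elements of $\kappa$, and hence some $\alpha_1 \in \kappa$ satisfies $f(\alpha_1) \neq 0$. This cardinality comparison is the only place the hypothesis enters: when $\kappa$ is infinite the degree bound is automatic, and when $\kappa$ is finite it forces $d \le |\kappa| - 1$.

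For the inductive step, I would regard $f$ as a polynomial in $x_n$ with coefficients in $\kappa[x_1,\ldots,x_{n-1}]$, writing
\[
  f = \sum_{k=0}^{d_n} g_k(x_1,\ldots,x_{n-1})\, x_n^k, \qquad d_n = \deg_{x_n} f < |\kappa|.
\]
Since $f \neq 0$, some coefficient $g_k$ is nonzero. The crucial observation is that for each $i < n$ the degree of $g_k$ in $x_i$ is at most $\deg_{x_i} f < |\kappa|$, because $g_k$ collects exactly those monomials of $f$ whose $x_n$-degree equals $k$, stripped of the factor $x_n^k$. Thus $g_k$ satisfies the hypotheses of the lemma in $n-1$ variables, and the inductive hypothesis supplies a point $(\alpha_1,\ldots,\alpha_{n-1}) \in \kappa^{n-1}$ with $g_k(\alpha_1,\ldots,\alpha_{n-1}) \neq 0$. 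Substituting these values yields a univariate polynomial $f(\alpha_1,\ldots,\alpha_{n-1},x_n) \in \kappa[x_n]$ whose coefficient of $x_n^k$ is nonzero; in particular it is not identically zero, and its degree in $x_n$ is at most $d_n < |\kappa|$. Applying the base case then produces $\alpha_n \in \kappa$ with $f(\alpha_1,\ldots,\alpha_n) \neq 0$, completing the induction.

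There is no serious obstacle here: the argument is a routine induction, and the one genuinely load-bearing point is the bookkeeping that the chosen coefficient $g_k$ inherits the per-variable degree bounds from $f$, so that the inductive hypothesis applies to it. The only care needed is to phrase the root-counting step uniformly for finite and infinite $\kappa$, which the comparison ``number of roots $< |\kappa|$'' handles cleanly.
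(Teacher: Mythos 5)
Your proof is correct. Note, however, that the paper itself offers no argument for this statement: it is imported wholesale as \cite[Lemma 2]{lindstroem73}, so there is no internal proof to compare against. Your induction on the number of variables --- isolating a nonzero coefficient $g_k$ of $x_n^k$, observing that $g_k$ inherits the per-variable degree bounds, applying the inductive hypothesis to get a point where $g_k$ does not vanish, and finishing with the univariate root count --- is the standard argument for this classical fact, and every step checks out; in particular the key bookkeeping point, that $\deg_{x_i} g_k \le \deg_{x_i} f$ because $g_k$ collects the monomials of $f$ with $x_n$-degree exactly $k$, is exactly right. The only cosmetic remark is that the statement implicitly assumes $f \neq 0$ (otherwise the conclusion is vacuously false), which your proof uses when selecting a nonzero $g_k$; this matches the intended reading of the lemma.
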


\begin{lemma} \label{lemma: lift}
	Let $S$ be a totally ordered idempotent semifield, $\kappa$ a field,
	and let $\val: \puiseux{S} \to S$ be the standard valuation.
	Let $f_1,f_2,\ldots, f_m$ be polynomials in
	$\puiseux{S}[x_1,\ldots, x_n]$,
	and let $a \in S^n$.
	If $d_i$ is the maximum degree of $f_i$ in each $x_j$ alone,
	and $|\kappa| > \sum_{i=1}^m d_i$, 
	then there exists a non-empty Zariski open subset of $\kappa^n$
	such that if the leading coefficients of $\alpha \in \val^{-1}(a)$ lie in that subset, then
	\begin{equation*}
		\val(f_i)(a) = \val(f_i(\alpha))
	\end{equation*}	
	for all $i$. 
\end{lemma}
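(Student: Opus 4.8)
The plan is to reduce the claimed equality, for each $i$, to the non-vanishing at the leading coefficients of $\alpha$ of a single polynomial over $\kappa$ — the initial form of $f_i$ with respect to the weight $a$ — and then to invoke Lemma \ref{lemma: poly-non-zero} to choose leading coefficients that avoid all $m$ of these vanishing loci simultaneously. First I would fix notation for a lift: write $f_i = \sum_{\mathbf u} c_{i,\mathbf u}\, x^{\mathbf u}$ with $c_{i,\mathbf u} \in \puiseux{S}$, and for $\alpha \in \val^{-1}(a)$ let $\beta_j \in \kappa^\times$ be the leading coefficient of $\alpha_j$. Since $\val$ is multiplicative and $\val(\alpha_j) = a_j$, each monomial satisfies $\val(c_{i,\mathbf u}\alpha^{\mathbf u}) = \val(c_{i,\mathbf u})\prod_j a_j^{u_j}$, which is exactly the corresponding term of the tropical polynomial $\val(f_i)$ evaluated at $a$. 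Because the valuation of a sum of power series is at most the maximum of the valuations of the summands, with equality unless the lowest-order coefficients cancel, one always has $\val(f_i(\alpha)) \le \val(f_i)(a)$, and equality is equivalent to the non-vanishing of the coefficient of the bottom power of $t$ in $f_i(\alpha)$.

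The key step is to identify that bottom coefficient explicitly. Let $T_i$ be the set of exponents $\mathbf u$ attaining the maximum $\val(f_i)(a) = \max_{\mathbf u} \val(c_{i,\mathbf u})\prod_j a_j^{u_j}$, and let $\gamma_{i,\mathbf u} \in \kappa^\times$ denote the leading coefficient of $c_{i,\mathbf u}$. The monomials with $\mathbf u \in T_i$ are precisely those whose contribution to $f_i(\alpha)$ sits in the bottom power of $t$, while every other monomial and every higher-order correction (from higher terms of the $\alpha_j$ and of the $c_{i,\mathbf u}$) contributes only to strictly higher powers; hence that coefficient equals
\begin{equation*}
	P_i(\beta_1,\ldots,\beta_n) := \sum_{\mathbf u \in T_i} \gamma_{i,\mathbf u} \prod_j \beta_j^{u_j},
\end{equation*}
the initial form of $f_i$ evaluated at $\beta$. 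Distinct exponents $\mathbf u$ give distinct monomials in the $\beta_j$ and each $\gamma_{i,\mathbf u}$ is nonzero, so $P_i$ is a nonzero polynomial; thus $\val(f_i(\alpha)) = \val(f_i)(a)$ holds exactly when $P_i(\beta) \ne 0$.

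Finally I would combine the $m$ conditions. Since every exponent appearing in $f_i$ has $u_j \le d_i$, we have $\deg_{\beta_j} P_i \le d_i$, whence $\deg_{\beta_j}\prod_{i=1}^m P_i \le \sum_{i=1}^m d_i < |\kappa|$. By Lemma \ref{lemma: poly-non-zero} the nonzero polynomial $\prod_i P_i$ does not vanish identically on $\kappa^n$, so $U := \{\beta \in \kappa^n : \prod_i P_i(\beta) \ne 0\}$ is a non-empty Zariski open subset. Whenever the leading coefficients of $\alpha \in \val^{-1}(a)$ lie in $U$, every $P_i(\beta) \ne 0$, and the desired equalities $\val(f_i(\alpha)) = \val(f_i)(a)$ follow for all $i$.

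The main obstacle, and the sole reason for the cardinality hypothesis, is precisely this last simultaneous choice over a possibly finite $\kappa$: one must keep the per-variable degree of $\prod_i P_i$ strictly below $|\kappa|$, which is exactly what $|\kappa| > \sum_i d_i$ provides, and one must further arrange that the witness can be realized by a genuine lift, i.e.\ with all $\beta_j \ne 0$. Incorporating this torus condition $\prod_j \beta_j \ne 0$ into the non-vanishing count is the delicate bookkeeping step, and it is where the comparison of $\sum_i d_i$ with $|\kappa|$ must be carried out carefully; the conceptual content, by contrast, is entirely contained in the cancellation analysis that produces the nonzero initial-form polynomials $P_i$.
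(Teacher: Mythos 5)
Your proof is correct and follows essentially the same route as the paper's: collect the terms of $f_i$ achieving the tropical maximum at $a$ into an initial-form polynomial with coefficients in $\kappa$, observe that $\val(f_i(\alpha)) = \val(f_i)(a)$ holds exactly when this polynomial does not vanish at the leading coefficients of $\alpha$, and apply Lemma \ref{lemma: poly-non-zero} to the product of these initial forms via the per-variable degree bound $\sum_i d_i < |\kappa|$. The concern in your final paragraph about additionally enforcing $\prod_j \beta_j \neq 0$ is not needed for the lemma as stated, since the statement is only a conditional over actual lifts $\alpha \in \val^{-1}(a)$ (whose relevant leading coefficients are automatically nonzero), so your first three paragraphs already constitute a complete proof matching the paper's.
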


\begin{proof}
	Suppose $f_i = \sum_{\mathbf{u}} c^i_\mathbf{u} \mathbf{x}^\mathbf{u}$. 
	Collect all terms $c^i_\mathbf{u}$ such that $\val(f_i)(a) = \val(c^i_\mathbf{u})a_1^{u_1}\cdots a_n^{u_n}$
	into a new polynomial $g_i$.
	Then $\alpha \in \val^{-1}(a)$ satisfies $\val(f_i)(a) = \val(f_i(\alpha))$
	if and only if $g_i$ does not vanish on the leading coefficients of $\alpha$.
	The degree of $g_i$ in a single indeterminate is at most $d_i$,
	so by Lemma \ref{lemma: poly-non-zero},
	there is a point $\alpha$ in $\kappa^n$ such that $g_1g_2\cdots g_m$ does not vanish,
	i.e.\ where no $g_i$ vanishes.
\end{proof}

\begin{theorem} \label{theorem: realizable}
	Let $S$ be a totally ordered idempotent semifield, 
	and $\val: \puiseux{S} \to S$ be the standard valuation.
	Let $\Lambda \subseteq \puiseux{S}^E$ be a rank $d$ linear subspace,
	and $L = \trop{\Lambda}$.
	Let $A \in S^{F \times E}$ be a matrix.
	If $|\kappa| > \binom{|E|+|F|}{d}$,
	then $\tropim{A}{L}$ is realizable over $\puiseux{S}$;
	indeed for $\Delta \in \val^{-1}(A)$ with generic leading coefficients we have
	\begin{equation*}
		\tropim{A}{L} = \trop{\Delta \Lambda}.
	\end{equation*}
\end{theorem}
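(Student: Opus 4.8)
The plan is to realize $\tropim{A}{L}$ as the tropicalization of $\Delta\Lambda$ by first lifting the entire tropical graph construction to characteristic-zero linear algebra over $\puiseux{S}$, and then projecting. Since $\tropim{A}{L} = \pi_F(L_{g(w,A)})$ by definition, and projection of linear spaces commutes with tropicalization (Lemma~\ref{lemma: minor-plucker-coordinates} on the tropical side, ordinary coordinate projection on the classical side), it suffices to show that the tropical graph $L_{g(w,A)}$ is the tropicalization of the classical graph $\{v + \Delta v \mid v \in \Lambda\} \subseteq \puiseux{S}^{E \sqcup F}$ for generic $\Delta \in \val^{-1}(A)$. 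The classical graph is a rank-$d$ linear subspace whose Pl\"ucker coordinates are, up to sign, the maximal minors of the block matrix $\begin{bmatrix} I \\ \Delta \end{bmatrix}$ applied to a basis matrix of $\Lambda$; these are polynomial expressions in the entries of $\Delta$ and in the Pl\"ucker coordinates of $\Lambda$.

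The key computation is to match tropical and classical Pl\"ucker coordinates. By the explicit formula in Lemma~\ref{lemma: plucker-image-tropdet} (and its proof, which computes $g(w,A)_{K \cup J}$ as a sum $\sum_{I \subseteq E-K} w_{I \cup K}\tropdet{A_{JI}}$), each coordinate of $g(w,A)$ is a finite $S$-sum of products of coordinates $w_{\ast}$ of $L$ with tropical minors $\tropdet{A_{JI}}$ of $A$. On the classical side, the corresponding Pl\"ucker coordinate of the graph of $\Delta$ is the analogous Cauchy--Binet sum $\sum_I P_{I \cup K}(\Lambda)\det(\Delta_{JI})$, where $P_\bullet(\Lambda)$ are the Pl\"ucker coordinates of $\Lambda$. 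Since $\val(P_{I\cup K}(\Lambda)) = w_{I \cup K}$ and $\val(\det \Delta_{JI}) = \tropdet{A_{JI}}$ (a single maximal-minor valuation equals the tropical determinant for generic lifts, by the standard permanent-expansion argument), each individual summand tropicalizes correctly. The only obstacle is cancellation: the valuation of a sum can strictly exceed the tropical sum of valuations when the top-valuation terms cancel in $\puiseux{S}$.

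The hard part is therefore controlling this cancellation, and this is exactly what Lemma~\ref{lemma: lift} is designed to handle. I would apply Lemma~\ref{lemma: lift} to the family of polynomials $f_J$ (one for each $J \in \binom{E \sqcup F}{d}$) in the indeterminates given by the entries of $\Delta$, where $f_J$ is the Cauchy--Binet expression for the $J$-th Pl\"ucker coordinate of the graph. Each $f_J$ has degree at most one in each entry of $\Delta$ (a minor is multilinear in the matrix entries), so the degree bound $\sum_i d_i$ in Lemma~\ref{lemma: lift} is controlled by the number of Pl\"ucker coordinates, which is at most $\binom{|E|+|F|}{d}$; this is precisely where the hypothesis $|\kappa| > \binom{|E|+|F|}{d}$ enters. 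Lemma~\ref{lemma: lift} then furnishes a non-empty Zariski-open set of leading coefficients for $\Delta \in \val^{-1}(A)$ on which $\val(f_J(\Delta)) = \val(f_J)(A)$ simultaneously for all $J$, i.e.\ no top-order cancellation occurs in any coordinate. For such generic $\Delta$, the Pl\"ucker vector of the graph tropicalizes coordinatewise to $g(w,A)$, so the classical graph tropicalizes to $L_{g(w,A)}$. Projecting to $F$ and invoking the compatibility of projection with tropicalization then yields $\tropim{A}{L} = \trop{\Delta\Lambda}$, completing the proof.
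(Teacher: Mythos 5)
Your proposal is correct and follows essentially the same route as the paper's proof: reduce to showing that the classical graph of a generic lift $\Delta$ tropicalizes to the tropical graph of $A$ on $L$, expand the graph's Pl\"ucker coordinates as polynomials of degree at most one in each entry of $\Delta$ with coefficients the Pl\"ucker coordinates of $\Lambda$, and invoke Lemma~\ref{lemma: lift} with the hypothesis $|\kappa| > \binom{|E|+|F|}{d}$ to exclude leading-term cancellation in all coordinates simultaneously. The only cosmetic difference is that you organize the coordinate computation via Cauchy--Binet on the block matrix $\begin{bmatrix} I \\ \Delta \end{bmatrix}$, whereas the paper uses the Hodge-star/exterior-algebra formula; these are the same expansion.
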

\begin{proof}
	Because coordinate projection commutes with tropicalization,
	it suffices to show for $\Delta \in \val^{-1}(A)$ with generic leading coefficients
	that the tropicalization of the graph of $\Delta$ on $\Lambda$
	is the tropical graph of $A$ on $L$.

	Let the standard basis of $\puiseux{S}^{E \cup F}$ be $e_1,\ldots, e_n, f_1,\ldots,f_m$
	with dual basis $x_1,\ldots,x_n,y_1,\ldots,y_m$.
	Let $p$ be the Pl\"ucker vector for $\Lambda$, defined up to a scalar.
	Then the (classical) graph of a matrix $X$ with $X_{ij}$ on $\Lambda \subseteq \puiseux{S}$ 
	has Pl\"ucker vector
	\begin{equation} \label{equation: classical-graph-coordinates}
		\star_{E \sqcup F} \left( \star_E p \wedge \bigwedge_{j=1}^m \left(y_j - \sum_{i=1}^n X_{ji} x_i \right)\right),
	\end{equation}
	where $\star_{E\sqcup F}$ and $\star_E$ denote the classical Hodge stars on $\puiseux{S}^{E\sqcup F}$ and $\puiseux{S}^{E}$.
	For each $I \in \binom{E \sqcup F}{d}$, let $f_I \in \puiseux{S}[X_{ji}]$ be the polynomial in indeterminates indexed by $F \times E$
	that is the coefficient of $e_I$ in \eqref{equation: classical-graph-coordinates}.
	The polynomial $f_I$ is only well-defined up to sign,
	but $\val(f_I)$ and $\val(f_I(\Delta))$ are always well-defined for $\Delta \in \puiseux{S}^{F \times E}$. 
	We calculate the coefficients of $f_I$.
	Let $J = I \cap E$ and $K = I \cap F$.
	Then the coefficient of $e_{I}$ in \eqref{equation: classical-graph-coordinates} is
	\begin{align*}
		\pm \left(\star_E p \wedge \bigwedge_{j=1}^m \left(y_j - \sum_{i=1}^n X_{ji} x_i\right)\right)_{E-J \cup F - K} \\
		 = \pm \left( \star_E p \wedge \bigwedge_{j \in K} \left(- \sum_{i=1}^n X_{ji} x_i\right)\right)_{E-J} \\
	\end{align*}
	and hence the monomials appearing in $f_I$ are of the form $\prod_{j \in K} X_{j,\sigma(j)}$ for some injective $\sigma: K \to (E-J)$.
	The coefficient of this monomial is exactly $\pm p_{J\cup\sigma(K)}$.
	Hence, every coefficient of a monomial in $f_I$ is of the form $\pm p_{J'}$ for some $J' \subseteq E$.
	
	By definition of the classical and tropical exterior algebras,
	$I \mapsto \val(f_I)(A)$ are the tropical Pl\"ucker coordinates for the tropical graph of $A$ on $L$. 
	The degree of $f_I$ in each indeterminate is at most $1$, and by hypothesis 
	$$|\kappa| > \left| \left\{f_I: I \subseteq E\sqcup F, |I|=d\right\}\right|.$$
	By Lemma \ref{lemma: lift}, there is a non-empty Zariski open subset of $\kappa^{F \times E}$
	such that if the leading coefficients of $\Delta \in \val^{-1}(A)$ lie in that open subset,
	then the tropical graph of $A$ on $L$ is the tropicalization of the graph of $\Delta$ on $\Lambda$.
\end{proof}

\section{Stable sum}

The tropical image allows us to understand the stable sum of tropical linear spaces as a tropical image. 
This also provides a generalization of the stable sum to tropical linear spaces $L_w, L_z$ where $w \wedge z = 0$.

The tropical addition map $+: S^{E \sqcup E} \to S^E$, coinciding with the tropicalization of classical addition, has matrix
\[ A_+ = \begin{bmatrix} I_{E} & I_{E} \end{bmatrix},\]
where $I_{E}$ is the identity matrix on $S^E$. 
The key observation is that the underlying bipartite graph of this matrix 
is the same graph used to define the matroid union \cite[Theorem 12.3.1]{oxley92},
suggesting investigating the tropical image under $A_+$ over an arbitrary semifield.
\begin{theorem} \label{theorem: stable-sum}
	Let $S$ be a totally ordered idempotent semifield.
	Let $L_w$ and $L_z$ be tropical linear spaces in $S^E$.
	If $w \wedge z \neq 0$, then
	\begin{equation*}
		\tropim{A_+}{L_w \oplus L_z} = L_{w \wedge z},
	\end{equation*}
	where $A_+$ is the matrix of the addition map $+: S^{E \sqcup E} \to S^E$.
\end{theorem}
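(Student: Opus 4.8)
The plan is to compute the tropical Plücker coordinates of $\tropim{A_+}{L_w \oplus L_z}$ directly from Lemma \ref{lemma: plucker-image-tropdet} and recognize them as the coordinates of $w \wedge z$. Write the ground set of $L_w \oplus L_z$ as $E_1 \sqcup E_2$, two labelled copies of $E$, with the projection $\pi \colon E_1 \sqcup E_2 \to E$ identifying copies; then the Plücker vector of $L_w \oplus L_z$ is $\tilde w \wedge \tilde z$, where $\tilde w, \tilde z \in \bigwedge S^{E_1 \sqcup E_2}$ denote $w$ and $z$ supported on $E_1$ and on $E_2$ respectively. The matrix $A_+$ is the incidence matrix of the bipartite graph $\Gamma$ joining each $k \in E$ to both of its copies $k_1, k_2$.

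First I would pin down the contraction basis $K$ appearing in Lemma \ref{lemma: plucker-image-tropdet}. Setting $d = \rank w$ and $e = \rank z$, the hypothesis $w \wedge z \neq 0$ pushes forward to $\B$ to give $\bar w \wedge \bar z \neq 0$, so the underlying matroids $M_w, M_z$ admit a disjoint pair of bases and their union $M_w \vee M_z$ has rank $d+e$. Since $\Gamma$ is exactly the graph defining matroid union, Theorem \ref{theorem: image-underlying-matroid} identifies the underlying matroid of the image with $M_w \vee M_z$, whence $\rank \tropim{A_+}{L_w \oplus L_z} = d+e = \rank(L_w \oplus L_z)$. As the tropical graph has the same rank $d+e$ by Lemma \ref{lemma: image-basics}, its contraction to $E_1 \sqcup E_2$ is trivial, so I may take $K = \emptyset$ in Lemma \ref{lemma: plucker-image-tropdet}.

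Next comes the evaluation of the minors. For $J \subseteq E$ and $I \subseteq E_1 \sqcup E_2$ with $|I| = |J|$, a nonzero term of $\tropdet{(A_+)_{JI}}$ is a bijection $J \to I$ sending each $k$ to a copy of $k$; such a bijection exists iff $\pi$ restricts to a bijection $I \to J$, i.e.\ iff $I$ selects exactly one of the two copies $k_1, k_2$ for each $k \in J$. Thus $\tropdet{(A_+)_{JI}} = 1$ precisely for these ``transversal'' $I$ and is $0$ otherwise. Substituting into Lemma \ref{lemma: plucker-image-tropdet} with $K = \emptyset$ and writing $z_J$ for the image coordinate,
\[
	z_J = \sum_{I} \tropdet{(A_+)_{JI}} \, (\tilde w \wedge \tilde z)_I = \sum_{I \text{ transversal}} (\tilde w \wedge \tilde z)_I .
\]
Because $\tilde w$ is supported on $E_1$ and $\tilde z$ on $E_2$, the coordinate $(\tilde w \wedge \tilde z)_I$ equals $w_{\pi(I \cap E_1)}\, z_{\pi(I \cap E_2)}$ when $|I \cap E_1| = d$ and $|I \cap E_2| = e$, and vanishes otherwise.

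Finally, a transversal $I$ with these intersection sizes is the same datum as an ordered partition $J = J_1 \sqcup J_2$ with $|J_1| = d$ and $|J_2| = e$ (put $J_1 = \pi(I \cap E_1)$, $J_2 = \pi(I \cap E_2)$), and this correspondence is a bijection, so
\[
	z_J = \sum_{\substack{J_1 \sqcup J_2 = J \\ |J_1| = d}} w_{J_1} z_{J_2} = (w \wedge z)_J ,
\]
giving $\tropim{A_+}{L_w \oplus L_z} = L_{w \wedge z}$. I expect the main obstacle to be the two pieces of bookkeeping: justifying $K = \emptyset$ through the matroid-union rank argument, so that Lemma \ref{lemma: plucker-image-tropdet} is applied at the correct rank $d+e$, and carefully matching the matching condition defining the nonzero minors of $A_+$ with the index set of the wedge-product expansion. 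Once these are in place, the identity $z_J = (w \wedge z)_J$ is forced.
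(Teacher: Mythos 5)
Your proposal is correct and takes essentially the same route as the paper's own proof: both identify the underlying matroid of the image as the matroid union via Theorem \ref{theorem: image-underlying-matroid}, use $w \wedge z \neq 0$ to get disjoint bases and hence rank additivity, apply Lemma \ref{lemma: plucker-image-tropdet} with trivial contraction basis, and recognize the minors $\tropdet{(A_+)_{JI}}$ as indicators of transversal sets so that the coordinate formula becomes the partition sum $\sum_{J' \sqcup J'' = J} w_{J'} z_{J''} = (w \wedge z)_J$. The only difference is one of exposition: you spell out explicitly why the contraction basis $K$ may be taken empty, a point the paper leaves implicit in the phrase ``since the ranks of $L_w \oplus L_z$ and its tropical image are the same.''
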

\begin{proof}
	Let us write the domain as $S^{E' \sqcup E''}$ 
	for $E' = \{e_1',\ldots,e_n'\}$ and $E'' = \{e_1'', \ldots, e_n''\}$
	two copies of $E$
	The underlying bipartite graph of addition 
	is the graph on $(E' \sqcup E'') \sqcup E$ 
	where a vertex in $E$ is exactly adjacent to its two copies in $E' \sqcup E''$.
	By Theorem \ref{theorem: image-underlying-matroid} and \cite[Theorem 12.3.1]{oxley92},
	the underlying matroid of $\tropim{A_+}{L_w \oplus L_z}$ is the matroid union
	of the underlying matroids of $L_w$ and $L_z$.
	Because $w \wedge z \neq 0$, these matroids have disjoint bases,
	so $\rank \tropim{+}{L_w \oplus L_z} = \rank L_w + \rank L_z$.

	Now we calculate the tropical Pl\"ucker coordinates of $\tropim{+}{L_w \oplus L_z}$.
	Since the ranks of $L_w \oplus L_z$ and its tropical image are the same,
	by Lemma \ref{lemma: plucker-image-tropdet}, the tropical Pl\"ucker coordinates are given by
	\begin{align*}
		J \mapsto \sum_{I \subseteq E' \sqcup E''} \tropdet{(A_+)_{JI}} w_{I \cap E'}z_{I \cap E''}
		= \sum_{J = J' \sqcup J''} w_{J'} z_{J''}
	\end{align*}
	since $\tropdet{(A_+)_{JI}}$ is $1$ if and only if the copies of $I \cap E'$ and $I \cap E''$ in $E$ are a partition of $J$.
	This shows that the tropical Plucker vector of the tropical image is exactly $w \wedge z$, as desired.
\end{proof}

The stable sum (and dually, stable intersection) of tropical linear spaces 
$L_w \subseteq S^E$ and $L_z \subseteq S^E$ has so far only been defined when $w\wedge z \neq 0$ \cite{fink15}.
Theorem \ref{theorem: stable-sum} suggests the following definition for arbitrary $L_w$ and $L_z$:
\begin{definition}
	Let $L_w \subseteq S^E$ and $L_z \subseteq S^E$ be tropical linear spaces.
	Then the \textit{stable sum} of $L_w$ and $L_z$ is 
	\begin{equation*}
		L_w \stsum L_z = \tropim{A_+}{L_w \oplus L_z},
	\end{equation*}
\end{definition}
In the case that $w \wedge z$ is possibly zero, the stable sum is equal to a rank-additive stable sum of subspaces:
\begin{corollary}
	Let $L_w $ and $L_z$ be tropical linear spaces in $S^E$.
	Then if $L_{w'}$ and $L_{z'}$ are subspaces of $L_w$ and $L_z$
	such that $w'\wedge z'$ is non-zero and has rank equal to $L_w \stsum L_z$,
	then 
	\begin{equation*}
		L_{w' \wedge z'} = L_w \stsum L_z.
	\end{equation*}
\end{corollary}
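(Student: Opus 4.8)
The plan is to produce the containment $L_{w' \wedge z'} \subseteq L_w \stsum L_z$ from monotonicity of the tropical image, and then to upgrade it to an equality using the hypothesis on ranks. The upgrade rests on a general principle that I would isolate as a lemma: a tropical linear space contained in another of the same rank must coincide with it.

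First I would establish the containment. Writing the domain of $A_+$ as $S^{E'} \oplus S^{E''}$ with $E', E''$ two copies of $E$, the inclusions $L_{w'} \subseteq L_w$ and $L_{z'} \subseteq L_z$ give $L_{w'} \oplus L_{z'} \subseteq L_w \oplus L_z$ as submodules of $S^{E \sqcup E}$; both are tropical linear spaces, with Pl\"ucker vectors the wedges taken in the disjoint exterior algebra. Applying monotonicity of the tropical image (Lemma \ref{lemma: image-basics}(c)) to $A_+$ yields $\tropim{A_+}{L_{w'} \oplus L_{z'}} \subseteq \tropim{A_+}{L_w \oplus L_z} = L_w \stsum L_z$. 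Since $w' \wedge z' \neq 0$, Theorem \ref{theorem: stable-sum} identifies the left-hand side as $L_{w' \wedge z'}$, so $L_{w' \wedge z'} \subseteq L_w \stsum L_z$. As the rank $\rank w' + \rank z'$ of $w' \wedge z'$ equals $\rank (L_w \stsum L_z)$ by hypothesis, the two sides have the same rank.

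It then remains to prove: if $L_u \subseteq L_v$ are tropical linear spaces of the same rank $d$, then $L_u = L_v$, i.e.\ $u$ and $v$ are proportional. I would prove this in two steps. For the underlying matroids, the coordinatewise map $S \to \B$ is a semiring homomorphism and therefore carries the tropical incidence relations of Lemma \ref{lemma: tropical-incidence-relations} for $u,v$ to the corresponding relations for their push-forwards; hence the underlying matroids satisfy $L_{M_u} \subseteq L_{M_v}$ over $\B$, which says exactly that the identity on $E$ is a matroid quotient (strong map) $M_v \to M_u$ (cf.\ \cite{crowley17, oxley92}). A rank-preserving quotient is an equality, so $M_u = M_v =: M$. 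To lift this to proportionality, normalize $u_{B_0} = v_{B_0}$ at one basis $B_0$ of $M$. For adjacent bases $B = \tilde B + a$ and $B' = \tilde B + b$ with $\tilde B \in \binom{E}{d-1}$, applying Lemma \ref{lemma: tropical-incidence-relations} with the $(d+1)$-subset $\tilde B + a + b$ and the $(d-1)$-subset $\tilde B$ reduces to a two-term relation, forcing $v_{B'} u_{B} = v_{B} u_{B'}$; cancelling the common nonzero factor $u_B = v_B$ in the semifield gives $u_{B'} = v_{B'}$. Since the basis-exchange graph of $M$ is connected, $u$ and $v$ agree on all bases and vanish off them, so $u = v$.

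The main obstacle is this last lemma, and within it the matroidal step: recognizing that containment of tropical linear spaces over $\B$ is precisely the quotient order on matroids and invoking that a rank-preserving quotient is trivial. The lifting step is then a clean propagation along the basis-exchange graph, using that nonzero elements of a semifield are invertible. Both the containment and the equality of ranks are essential, which is exactly why the hypothesis requires $w' \wedge z'$ to be nonzero and to attain the rank of $L_w \stsum L_z$.
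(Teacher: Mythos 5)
Your proof is correct and follows the same route as the paper: monotonicity of the tropical image (Lemma \ref{lemma: image-basics}(c)) gives $L_{w'} \stsum L_{z'} \subseteq L_w \stsum L_z$, Theorem \ref{theorem: stable-sum} identifies the left side with $L_{w'\wedge z'}$, and the rank hypothesis closes the argument. The one place you go beyond the paper is the final step: the paper simply asserts that a tropical linear space contained in another of the same rank must equal it, while you prove this as a lemma --- pushing the incidence relations of Lemma \ref{lemma: tropical-incidence-relations} forward along $S \to \B$ to force equality of underlying matroids (a rank-preserving quotient being trivial), then propagating proportionality of the Pl\"ucker coordinates along the basis-exchange graph via the two-term incidence relation $v_{B'}u_B = v_B u_{B'}$. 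That supplementary argument is sound, and it is worth having: over $\T$ one could fall back on balanced polyhedral complexes to justify the paper's assertion, but over an arbitrary totally ordered idempotent semifield the polyhedral argument is unavailable, so your algebraic proof fills a gap the paper leaves implicit rather than taking a genuinely different path.
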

\begin{proof}
	Since $L_{w'}\oplus L_{z'} \subseteq L_w \oplus L_z$,
	by Lemma \ref{lemma: image-basics},
	\begin{equation} \label{equation: sub-stable-sum}
		L_{w'} \stsum L_{z'} \subseteq L_w \stsum L_z.
	\end{equation}
	But by Theorem \ref{theorem: stable-sum},
	$L_{w'} \stsum L_{z'} = L_{w' \wedge z'}$,
	and so the tropical linear spaces in \eqref{equation: sub-stable-sum} have the same rank.
	Hence, they are equal.
\end{proof}
	
Combining Theorem \ref{theorem: realizable} and \ref{theorem: stable-sum} shows that
if $\Lambda_1$ and $\Lambda_2$ are transverse subspaces of $\puiseux{S}^E$,
\begin{equation*}
	\trop{\Lambda_1} \stsum \trop{\Lambda_2} = \trop{\gamma_1\Lambda_1 + \gamma_2\Lambda_2}
\end{equation*}
for generic $(\gamma_1, \gamma_2) \in (\kappa^\times)^{E \sqcup E}$. 
Dualizing and observing that $\trop{\gamma\Lambda} = \trop{\Lambda}$ for any $\gamma \in (\kappa^\times)^E$
shows that if $\Lambda_1$ and $\Lambda_2$ intersect transversely, then 
\begin{equation*}
	\trop{\Lambda_1} \stint \trop{\Lambda_2} = \trop{\Lambda_1 \cap \gamma\Lambda_2}
\end{equation*}
for generic $\gamma \in (\kappa^\times)^E$.
This result is known for tropical varieties of all degrees \cite[Theorem 3.6.1]{maclagan15},
but the proof is entirely in the language of balanced polyhedral complexes.
The methods of this paper provide a new and algebraic proof in the linear case.
\\

\bibliography{references}

\begin{thebibliography}{10}

\bibitem{anderson18}
Laura Anderson.
\newblock Vectors of matroids over tracts.
\newblock {\em arXiv preprint arXiv:1607.04868v3}, 2018.

\bibitem{baker16}
Matthew Baker and Nathan Bowler.
\newblock Matroids over hyperfields.
\newblock {\em arXiv preprint arXiv:1601.01204}, 2016.

\bibitem{brylawski75}
Thomas~H. Brylawski.
\newblock An affine representation for transversal geometries.
\newblock {\em Studies in Appl. Math.}, 54(2):143--160, 1975.

\bibitem{crowley17}
Colin Crowley, Noah Giansiracusa, and Joshua Mundinger.
\newblock A module-theoretic approach to matroids.
\newblock {\em arXiv preprint arXiv:1712.03440}, 2017.

\bibitem{dress92}
Andreas W.~M. Dress and Walter Wenzel.
\newblock Valuated matroids.
\newblock {\em Adv. {M}ath.}, 93(2):214--250, 1992.

\bibitem{fink15}
Alex Fink and Felipe Rinc{\'o}n.
\newblock Stiefel tropical linear spaces.
\newblock {\em J. Combin. Theory Ser. A}, 135:291--331, 2015.

\bibitem{frenk13}
Bart Frenk.
\newblock {\em Tropical varieties, maps and gossip}.
\newblock PhD thesis, Eindhoven University of Technology, 2013.

\bibitem{giansiracusa16}
Jeffrey Giansiracusa and Noah Giansiracusa.
\newblock Equations of tropical varieties.
\newblock {\em Duke Math. J.}, 165(18):3379--3433, 2016.

\bibitem{giansiracusa17}
Jeffrey Giansiracusa and Noah Giansiracusa.
\newblock A {G}rassmann algebra for matroids.
\newblock {\em Manuscripta Math.}, 156(1-2):187--213, 2018.

\bibitem{haque12}
Mohammad~Moinul Haque.
\newblock Tropical incidence relations, polytopes, and concordant matroids.
\newblock {\em arXiv preprint arXiv:1211.2841}, 2012.

\bibitem{lindstroem73}
Bernt Lindstr\"om.
\newblock On the vector representations of induced matroids.
\newblock {\em Bull. London Math. Soc.}, 5:85--90, 1973.

\bibitem{maclagan15}
Diane Maclagan and Bernd Sturmfels.
\newblock {\em Introduction to tropical geometry}, volume 161 of {\em Graduate
  Studies in Mathematics}.
\newblock American Mathematical Society, 2015.

\bibitem{murota96}
Kazuo Murota.
\newblock On exchange axioms for valuated matroids and valuated delta-matroids.
\newblock {\em Combinatorica}, 16(4):591--596, 1996.

\bibitem{oxley92}
James Oxley.
\newblock {\em Matroid theory}, volume~21 of {\em Oxford Graduate Texts in
  Mathematics}.
\newblock Oxford University Press, second edition, 2011.

\bibitem{poonen93}
Bjorn Poonen.
\newblock Maximally complete fields.
\newblock {\em Enseign. Math. (2)}, 39(1-2):87--106, 1993.

\bibitem{shaw13}
Kristin~M. Shaw.
\newblock A tropical intersection product in matroidal fans.
\newblock {\em SIAM J. Discrete Math.}, 27(1):459--491, 2013.

\bibitem{speyer04}
David Speyer and Bernd Sturmfels.
\newblock The tropical {G}rassmannian.
\newblock {\em Adv. Geom.}, 4(3):389--411, 2004.

\bibitem{speyer08}
David~E. Speyer.
\newblock Tropical linear spaces.
\newblock {\em SIAM J. Discrete Math.}, 22(4):1527--1558, 2008.

\end{thebibliography}
\bibliographystyle{plain}

\end{document}